\documentclass[11pt,letterpaper]{amsart} 
\usepackage[margin=0.9in]{geometry}
\usepackage[rgb]{xcolor}
\usepackage{tikz}
\usepackage[bottom]{footmisc}
\usepackage{nicematrix}
\usepackage{verbatim}
\usepackage{mathtools}
\usepackage{amsmath}
\usepackage{amsxtra}
\usepackage{amscd}
\usepackage{amsthm}
\usepackage{amsfonts}
\usepackage{amssymb}
\usepackage{eucal}
\usepackage{float}
\usepackage{mathrsfs}
\usepackage{graphicx}
\usepackage{stmaryrd}
\usepackage{tikz-cd}
\usepackage{mathdots}
\usetikzlibrary{arrows}
\usetikzlibrary{cd}
\usepackage{ytableau}
\usepackage{tikz-cd}
\usetikzlibrary{arrows}
\usetikzlibrary{cd}
\usepackage{caption}
\tikzcdset{every label/.append style = {font = \small}}
\allowdisplaybreaks

\usepackage{latexsym,todonotes}
\PassOptionsToPackage{hyphens}{url}
\usepackage[
    linktoc=all,          
    hyperfootnotes=true,
    colorlinks=true,
    linkcolor=blue,
    urlcolor=orange,
    citecolor=blue
]{hyperref}
\usepackage[all, cmtip]{xy}

\usepackage{stmaryrd}
\usepackage{color} 

\usepackage{amsrefs}

\newtheorem{thm}{Theorem} [section]
\newtheorem{cor}[thm]{Corollary}
\newtheorem{lem}[thm]{Lemma}
\newtheorem{prop}[thm]{Proposition}
\newtheorem{conj}[thm]{Conjecture}

\newtheorem{remark}[thm]{Remark}

\theoremstyle{definition}

\theoremstyle{remark}

\numberwithin{equation}{section}
\begin{document}

\newcommand{\thmref}[1]{Theorem~\ref{#1}}
\newcommand{\secref}[1]{Section~\ref{#1}}
\newcommand{\lemref}[1]{Lemma~\ref{#1}}
\newcommand{\propref}[1]{Proposition~\ref{#1}}
\newcommand{\corref}[1]{Corollary~\ref{#1}}
\newcommand{\remref}[1]{Remark~\ref{#1}}
\newcommand{\eqnref}[1]{(\ref{#1})}
\newcommand{\exref}[1]{Example~\ref{#1}}

 \newcommand{\calA}{\mathcal{A}}
  \newcommand{\calD}{\mathcal{D}}
 \newcommand{\fraksl}{\mathrm{\mathfrak{sl}}}
 \def\calUP{{\mathcal {UP}}}
 \newcommand{\GSp}{\mathrm{GSp}}
 \newcommand{\bbP}{\mathbb{P}}
 \newcommand{\PGSp}{\mathrm{PGSp}}
\newcommand{\PGSO}{\mathrm{PGSO}}
\newcommand{\PGO}{\mathrm{PGO}}
\newcommand{\SO}{\mathrm{SO}}
\newcommand{\GO}{\mathrm{GO}}
\newcommand{\GSO}{\mathrm{GSO}}
\newcommand{\Spin}{\mathrm{Spin}}
\newcommand{\GSpin}{\mathrm{GSpin}}
\newcommand{\Sp}{\mathrm{Sp}}
\newcommand{\PGL}{\mathrm{PGL}}
\newcommand{\GL}{\mathrm{GL}}
\newcommand{\SL}{\mathrm{SL}}
\newcommand{\U}{\mathrm{U}}
\newcommand{\ind}{\mathrm{ind}}
\newcommand{\Ind}{\mathrm{Ind}}
\newcommand{\im}{\mathrm{im}}
\renewcommand{\ker}{\mathrm{ker}}
 \newcommand{\triv}{\mathrm{triv}}
  \newcommand{\std}{\mathrm{std}}
 \newcommand{\Ad}{\mathrm{Ad}}
  \newcommand{\ad}{\mathrm{ad}}
  \newcommand{\Tr}{\mathrm{Tr}}
  \renewcommand{\S}{\mathscr{S}}
  \newcommand{\Y}{\mathbb{Y}}
\newcommand{\End}{\mathrm{End}}
\newcommand{\Ext}{\mathrm{Ext}}
\newcommand{\Mat}{\mathrm{Mat}}
\newcommand{\vol}{\mathrm{vol}}
\newcommand{\bigzero}{\mbox{\normalfont\Large\bfseries 0}}

\newtheorem{innercustomthm}{{\bf Theorem}}
\newenvironment{customthm}[1]
  {\renewcommand\theinnercustomthm{#1}\innercustomthm}
  {\endinnercustomthm}
  
  \newtheorem{innercustomcor}{{\bf Corollary}}
\newenvironment{customcor}[1]
  {\renewcommand\theinnercustomcor{#1}\innercustomcor}
  {\endinnercustomthm}
  
  \newtheorem{innercustomprop}{{\bf Proposition}}
\newenvironment{customprop}[1]
  {\renewcommand\theinnercustomprop{#1}\innercustomprop}
  {\endinnercustomthm}

\newcommand{\bbinom}[2]{\begin{bmatrix}#1 \\ #2\end{bmatrix}}
\newcommand{\cbinom}[2]{\set{\^!\^!\^!\begin{array}{c} #1 \\ #2\end{array}\^!\^!\^!}}
\newcommand{\abinom}[2]{\ang{\^!\^!\^!\begin{array}{c} #1 \\ #2\end{array}\^!\^!\^!}}
\newcommand{\qfact}[1]{[#1]^^!}

\newcommand{\nc}{\newcommand}
\def\C{{\mathbb C}}
\def\R{{\mathbb R}}
\def\Z{{\mathbb Z}}
\def\Q{{\mathbb Q}}
\def\A{{\mathbb A}}
\def\G{{\mathbb G}}
\def\g{{\mathfrak g}}
\def\m{{\mathfrak m}}
\def\k{{\mathfrak k}}
\def\calP{{\mathcal P}}
\def\calI{{\mathcal I}}
\def\calA{{\mathcal A}}
\def\calB{{\mathcal B}}
\def\calF{{\mathcal F}}
\def\calM{{\mathcal M}}
\def\calN{{\mathcal N}}
\def\calT{{\mathcal T}}
\def\calH{{\mathcal H}}
\def\calR{{\mathcal R}}
\def\calFJ{{\mathcal {FJ}}}
\def\calS{{\mathcal S}}
\def\calW{{\mathcal W}}
\def\p{{\mathfrak p}}
\def\sl{{\mathfrak sl}}
\def\su{{\mathfrak su}}
\def\h{{\mathfrak h}}
\def\O{{\mathbb O}}
\def\OO{{\mathcal O}}
\def\tm{{\times}}
\def\sm{{\setminus}}
\def\oomm{{\overline{\omega}}}
\def\D{{\delta}}
\def\Om{{\Omega}}
\newcommand{\e}{{\epsilon}}

\newcommand{\calZ}{\mathcal{Z}}
\newcommand{\omm}{{\omega}}
\newcommand{\RR}{\right}
\newcommand{\LL}{\left}
\newcommand{\floor}[1]{\lfloor #1 \rfloor}
\newcommand{\pair}[1]{\langle {#1} \rangle}

\newcommand{\frakg}{\mathfrak{g}}
\newcommand{\frakgl}{\mathfrak{gl}}
\newcommand{\frakso}{\mathfrak{so}}
\newcommand{\fraksp}{\mathfrak{sp}}
\newcommand{\frako}{\mathfrak{o}}
\newcommand{\fraka}{\mathfrak{a}}
\newcommand{\fraku}{\mathfrak{u}}
\newcommand{\frakp}{\mathfrak{p}}
\newcommand{\fraksu}{\mathfrak{su}}
\newcommand{\frakh}{\mathfrak{h}}
\newcommand{\V}{{\Vert}}
\newcommand{\ord}{\rm {ord}}
\newcommand{\Stab}{Stab}
\newcommand{\Sym}{Sym}
\newcommand{\tr}{\mathrm{tr}}
\newcommand{\Lie}{\mathrm{Lie}}
\newcommand{\rank}{\mathrm{rank}}

\newcommand{\innerproduct}[2]{\langle #1, #2 \rangle}
\newcommand{\HSpin}{\mathrm{HSpin}}
\newcommand{\pr}{\mathrm{pr}}
\newcommand{\GU}{GU}
\newcommand{\SU}{SU}
\newcommand{\Orth}{O}
\newcommand{\glue}{glue}
\newcommand{\diag}{\operatorname{diag}}
\newcommand{\charf}{char}

\newcommand{\ff}{B}

\nc{\etab}{\eta^{\bullet}}
\newcommand{\Iblack}{\I_{\bullet}}
\newcommand{\wb}{w_\bullet}
\newcommand{\UIblack}{\U_{\Iblack}}

\newcommand{\blue}[1]{{\color{blue}#1}}
\newcommand{\red}[1]{{\color{red}#1}}
\newcommand{\green}[1]{{\color{green}#1}}
\newcommand{\white}[1]{{\color{white}#1}}

\newcommand{\dvd}[1]{t_{\odd}^{{(#1)}}}
\newcommand{\dvp}[1]{t_{\ev}^{{(#1)}}}
\newcommand{\ev}{\mathrm{ev}}
\newcommand{\odd}{\mathrm{odd}}

\newcommand\TikCircle[1][2.5]{{\mathop{\tikz[baseline=-#1]{\draw[thick](0,0)circle[radius=#1mm];}}}}

\newcommand{\commentcustom}[1]{}

\raggedbottom

\title[Ginzburg's Conjecture on the Unramified Computation of Eulerian Integrals]
{Ginzburg's Conjecture on the Unramified
Computation of Eulerian Integrals}

\author{Colin Jia Sheng Loh}
 \address{Department of Mathematics, National University of Singapore, 10 Lower Kent Ridge Road, Singapore 119076}
\email{colinloh@u.nus.edu}

 \begin{abstract}
In this paper, we prove a conjecture of D. Ginzburg on the unramified computation of an Eulerian global integral associated with a cuspidal automorphic representation of $\GL_{2r}$ and the generalised Speh representation $\Delta(\tau_2, r)$ attached to a cuspidal automorphic representation $\tau_2$ of $\GL_2$. We also study an analogous Eulerian global integral involving degenerate Eisenstein series and compute its unramified local factors.
 \end{abstract}

\maketitle

\setcounter{tocdepth}{2}
\tableofcontents
\section{Introduction}
In the foundational paper \cite{G}, D. Ginzburg proposed a dimension equation intended to systematically classify Eulerian global integrals. This dimension equation was motivated by common features of previously studied Eulerian global integrals, and has subsequently guided several classifications of Eulerian global integrals. See for instance \cite{G14}, \cite{G16} and \cite{G17}.

As an application, D. Ginzburg introduced an Eulerian global integral defined on $\GL_{2r}$ and conjectured \cite[Conjecture 3]{G} the precise local $L$-factors represented by their unramified local components. This conjecture has been verified for the case of $r=2$ \cite[Proposition 2]{G}. In this paper, we will prove the conjectural unramified identity for all $r\geq2$ and analyse a related Eulerian global integral involving two degenerate Eisenstein series.

\subsection{Notation and preliminaries}
Let $k$ be a number field and $\A$ be its ring of adeles. Let $\psi: k \sm \A \to \C^\tm$ be a nontrivial additive character. For an unipotent group $U$, we write
\begin{align*}
    [U] = U(k) \sm U(\A).
\end{align*}
We equip $k\sm \A$ with the quotient measure normalised such that $\operatorname{vol}(k\sm \A, dx) =1$.
Let $r \geq2$. Let $\pi_{2r}$ and $\tau_2$ be irreducible, cuspidal, globally generic representations of $\GL_{2r}(\A)$ and $\GL_2(\A)$ respectively. We will assume that their central characters denoted by $\omega_{\pi_{2r}}$ and $\omega_{\tau_2}$ are trivial, i.e. $\omega_{\pi_{2r}} = \omega_{\tau_2} = \mathbf 1$.
Let $M_{a,b}$ be the space of $a\tm b$ matrices and $(UT)_r \subset M_{r,r}$ be the subspace of upper triangular matrices. We denote $E_{i,j}$ to be the usual elementary matrix and let $B_n = T_n N_n$ be the standard Borel subgroup of $\GL_n$ consisting of upper triangular matrices with $T_n$ being the diagonal torus of $\GL_n$, and $N_n$ being its unipotent radical, consisting of upper triangular unipotent matrices. We set $\psi_{N_n} : N_n(k) \sm N_n(\A) \to \C^\tm$ to be the character defined by
\begin{align*}
    \psi_{N_n}(u) = \psi\LL( \sum_{i=1}^{n-1} u_{i,i+1} \RR), && u = (u_{i,j}) \in N_n(\A).
\end{align*}
With this, we define the Whittaker coefficient $W_{\varphi_{\pi_{2r}}}^\psi$ of $\varphi_{\pi_{2r}} \in \pi_{2r}$ as
\begin{align}
    \label{eqn: Whittaker coefficient pi-2r}
    W_{\varphi_{\pi_{2r}}}^\psi(g) = \int_{[N_{2r}]} \varphi_{\pi_{2r}}(ng) \psi_{N_{2r}}^{-1}(n) \, dn, && g \in \GL_{2r}(\A).
\end{align}
For $1 \leq i \leq n-1$, we let $U_i = U_{i,n} \subset N_n$ be the abelian unipotent subgroup given by
\begin{align}
\label{eqn: U-i}
    U_{i} = \LL\{\begin{pmatrix}
I_i & X &  \\
 & 1 &  \\
 &  & I_{n-i-1}
\end{pmatrix}
\ \mid X \in M_{i,1} \RR\}.
\end{align}
Let $\Delta(\tau_2, r)$ be the generalised Speh representation of $\GL_{2r}(\A)$ associated to $\tau_2$ (see \cite{CFGK} for its definition). If $(n_1,\dots, n_t)$ is a composition of $N$, let $P_{n_1,\dots, n_t}$ be the standard parabolic subgroup of $\GL_N$ with Levi subgroup $M_{n_1,\dots, n_t} \cong \GL_{n_1} \tm \cdots \tm \GL_{n_t}$.

For $s, s_1,s_2 \in \C$, define $I(s_1,s_2)$ as the induced representation defined by
\begin{align*}
    I(s_1,s_2) = \Ind_{P_{r,r}(\A)}^{\GL_{2r}(\A)}\LL( \mathbf 1 \otimes
        \Ind_{P_{r-1,1}(\A)}^{\GL_r(\A)} \D_{P_{r-1,1}}^{s_2}
    \RR) \D_{P_{r,r}}^{s_1} \cong 
    \Ind_{P_{r,r-1,1}(\A)}^{\GL_{2r}(\A)} \D_{P_{r,r}}^{s_1} \D_{P_{r-1,1}}^{s_2},
\end{align*} 
where $I(s_1,s_2)$ consists of complex-valued functions $f(\cdot,s_1,s_2) : \GL_{2r}(\A) \to \C$ with the following quasi-invariance:
\begin{align}
    \label{eqn: f(s1,s2) quasi-invariance}
    f(\begin{pmatrix}
        g_1 &\ast&\ast\\
        &g_2&\ast\\
        &&t
    \end{pmatrix}h,s_1,s_2) = 
    |\det g_1|^{rs_1} |\det g_2|^{-rs_1+s_2}
    |t|^{-rs_1-(r-1)s_2} f(h,s_1,s_2),
\end{align}
for $g_1 \in \GL_r(\A), g_2 \in \GL_{r-1}(\A)$, $t \in \GL_1(\A)$ and $h \in \GL_{2r}(\A)$. Similarly, we define
\begin{align*}
    I(s) = \Ind_{P_{r,r}(\A)}^{\GL_{2r}(\A)} \D_{P_{r,r}}^s
\end{align*}
consisting of functions $f(\cdot, s) : \GL_{2r}(\A) \to \C$ satisfying the following quasi-invariance:
\begin{align}
    \label{eqn: f(s) quasi-invariance}
    f(\begin{pmatrix}
        h_1 & X\\ & h_2
    \end{pmatrix} h, s) =|\det h_1|^{rs} |\det h_2|^{-rs} f(h, s),
\end{align}
for $h_1, h_2 \in \GL_r(\A)$, $X \in M_{r,r}(\A)$ and $h \in \GL_{2r}(\A)$.
We define their corresponding Eisenstein series $E(\cdot, s_1,s_2)$ and $E(\cdot, s)$ as
\begin{align}
    \label{eqn: Eisenstein series (s_1,s_2)}
    E(g, s_1,s_2) =& \sum_{\gamma \in P_{r,r-1,1}(k) \sm \GL_{2r}(k)} f(\gamma g, s_1,s_2),\\
    \label{eqn: Eisenstein series (s)}
    E(g, s) =& \sum_{\gamma \in P_{r,r}(k) \sm \GL_{2r}(k)} f(\gamma g, s),
\end{align}
for $g \in \GL_{2r}(\A)$, $f(\cdot,s_1,s_2) \in I(s_1,s_2)$ and $f(\cdot,s) \in I(s)$.
For $\varphi_{\pi_{2r}}\in \pi_{2r}$ and $\theta_{\tau_2} \in \Delta(\tau_2,r)$, define the global integrals $I_r(\varphi_{\pi_{2r}}, \theta_{\tau_2}, s_1,s_2)$ and $I_r(\varphi_{\pi_{2r}}, s,s_1,s_2)$ as
\begin{align}
    \label{eqn: global integral tau2}
    I_r(\varphi_{\pi_{2r}}, \theta_{\tau_2}, s_1,s_2)=& \int_{Z_{2r}(\A) \GL_{2r}(k) \sm \GL_{2r}(\A)}
    \varphi_{\pi_{2r}}(g) \theta_{\tau_2}(g)
    E(g,s_1,s_2) \, dg,\\
    \label{eqn: global integral (s,s1,s2)}
    I_r(\varphi_{\pi_{2r}},s,s_1,s_2) =& \int_{Z_{2r}(\A) \GL_{2r}(k) \sm \GL_{2r}(\A)}
    \varphi_{\pi_{2r}}(g) E(g,s) E(g,s_1,s_2)\,dg.
\end{align}
Here $Z_{2r}$ denotes the center of $\GL_{2r}$.
In \cite[Conjecture 3]{G}, D. Ginzburg conjectured the following.
\begin{conj}
    \label{conj: Ginzburg conjecture}
    \cite[Conjecture 3]{G}
    \footnote{There are some typographical inaccuracies in the formulation
of \cite[Conjecture 3]{G}, which we have corrected in the above
Conjecture \ref{conj: Ginzburg conjecture}.}
    The global integral $I_r(\varphi_{\pi_{2r}}, \theta_{\tau_2}, s_1,s_2)$ defined in \eqref{eqn: global integral tau2} is Eulerian and for $\operatorname{Re}(2rs_1 - s_2),\operatorname{Re}(s_1), \operatorname{Re}(s_2) \gg0$ the global integral unfolds to 
    \begin{align}
        \label{eqn: unfolded integral}
        I_r(\varphi_{\pi_{2r}}, \theta_{\tau_2}, s_1,s_2)= \int_{Z_{2r}(\A) N_{2r}(\A) \sm \GL_{2r}(\A)}
        W_{\varphi_{\pi_{2r}}}^{\psi}(g)
        W_{\theta_{\tau_2},2r}(g) 
        f_{W_{2r-1}}(g, s_1,s_2) \, dg.
    \end{align}
    Moreover, for a non-archimedean local field $F$, and unramified local integration data $W_{\pi_{2r}}^\circ, W_{\Delta(\tau_2,r)}^\circ$ and $f_{W_{2r-1}^\circ}$, its unramified local integral $\calZ_r(W^\circ, s_1,s_2)$ defined by
    \begin{align}
        \label{eqn: unramified local integral}
        \calZ_r(W^\circ, s_1,s_2) = \int_{Z_{2r}(F) N_{2r}(F) \sm \GL_{2r}(F)}
        W_{\pi_{2r}}^\circ(g)
        W_{\Delta(\tau_2,r)}^\circ(g)
        f_{W_{2r-1}^\circ}(g,s_1,s_2)\, dg
    \end{align}
    evaluates to 
    \begin{align*}
          \frac{
        L\LL(\pi_{2r} \tm \tau_2, rs_1 - \frac{r-1}{2} \RR) L\LL(\pi_{2r},\wedge^2,s_2\RR) 
        }{
        \displaystyle\zeta_F(rs_2) 
        \zeta_F(2rs_1+(r-1)s_2-r+1)
        \prod_{i=1}^{r-1} \zeta_F(2rs_1 - s_2 -i + 1)
        }.
    \end{align*}
    Here $L(\pi_{2r}\tm \tau_2, s)$ is the local tensor $L$-function and $L(\pi_{2r}, \wedge^2, s)$ is the local exterior square $L$-function of $\pi_{2r}$.
\end{conj}
The above conjecture has been verified for the base case of $r =2$ by D. Ginzburg \cite[Proposition 2]{G}. Moreover, he speculated that the global integral $I_r(\varphi_{\pi_{2r}}, s,s_1,s_2)$ defined in \eqref{eqn: global integral (s,s1,s2)} is an integral representation for the product of $L$-functions:
\begin{align*}
    L(\pi_{2r},s) L(\pi_{2r},s_1) L(\pi_{2r},\wedge^2,s_2).
\end{align*}
\subsection{Main result and organisation of paper}
In this paper, we prove Conjecture \ref{conj: Ginzburg conjecture} and verify Ginzburg's speculation. More specifically, we have the following main theorem. For precise definitions of the functions $W_{(r,r);2r}(\cdot,s)$ and $f_{W_{2r-1}}(\cdot,s_1,s_2)$ etc, see Sections \ref{sec: Unfolding of global integrals} and \ref{sec: Unramified computation}.
\begin{thm}
(Proposition \ref{prop: unfolding} and Theorem \ref{thm: unramified computation})
    \label{thm: main thm}
    \begin{enumerate}
        \item [(a)] Conjecture \ref{conj: Ginzburg conjecture} holds for all $r \geq 2$.
        \item [(b)] Similarly, the global integral $I_r(\varphi_{\pi_{2r}},s,s_1,s_2)$ defined in \eqref{eqn: global integral (s,s1,s2)} is Eulerian, and for $\operatorname{Re}(s_1-s) ,\operatorname{Re}(2rs_1-s_2), \operatorname{Re}(s), \operatorname{Re}(s_1),\operatorname{Re}(s_2) \gg0$ the global integral unfolds to 
        \begin{align}
        \label{eqn: I-r-s,s1,s2}
            I_r(\varphi_{\pi_{2r}}, s,s_1,s_2) = \int_{Z_{2r}(\A) N_{2r}(\A) \sm \GL_{2r}(\A)}
            W_{\varphi_{\pi_{2r}}}^\psi(g) 
            W_{(r,r);2r}(g,s)
            f_{W_{2r-1}}(g,s_1,s_2)\, dg.
        \end{align}
        Moreover, for a non-archimedean local field $F$, and unramified local integration data $W_{\pi_{2r}}^\circ, W_{(r,r);2r}^\circ$ and $f_{W_{2r-1}^\circ}$, its local integral $\calZ_r(W^\circ,s,s_1,s_2)$ defined by
        \begin{align*}
            \calZ_r(W^\circ,s,s_1,s_2)= \int_{Z_{2r}(F) N_{2r}(F) \sm \GL_{2r}(F)}
            W_{\pi_{2r}}^\circ(g)
            W_{(r,r);2r}^\circ(g,s)
            f_{W_{2r-1}^\circ}(g,s_1,s_2)\,dg
        \end{align*}
        evaluates to 
        \begin{align*}
             \frac{
        L\LL(\pi_{2r}, r(s_1+s-1) + \frac{1}{2} \RR)
        L\LL(\pi_{2r}, r(s_1-s) + \frac{1}{2} \RR)
        L\LL(\pi_{2r},\wedge^2,s_2\RR) 
        }{
        \displaystyle \zeta_F(rs_2) 
        \zeta_F(2rs_1+(r-1)s_2-r+1)
        \prod_{i=1}^{r-1} \zeta_F(2rs_1 - s_2 -i + 1)
        \prod_{j=1}^r \zeta_F(2rs - r + j)
        }.
        \end{align*}
    \end{enumerate}
\end{thm}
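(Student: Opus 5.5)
The argument has two stages: first unfold the global integrals to \eqref{eqn: unfolded integral} and \eqref{eqn: I-r-s,s1,s2}, then evaluate the resulting unramified local integrals.

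\emph{Unfolding.} For $\operatorname{Re}$ of the parameters large, I unfold $E(g,s_1,s_2)$ in \eqref{eqn: global integral tau2}. This gives an integral over $Z_{2r}(\A)P_{r,r-1,1}(k)\sm\GL_{2r}(\A)$ of $\varphi_{\pi_{2r}}\theta_{\tau_2}f$. The unipotent radical $U_{r,r-1,1}$ is then integrated first. Since $f$ is left invariant under it, we get the constant term of the product $\varphi_{\pi_{2r}}\theta_{\tau_2}$ along it.

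I would expand $\varphi_{\pi_{2r}}$ in Fourier coefficients along the successive abelian groups $U_i$ of \eqref{eqn: U-i}, in the standard Piatetski-Shapiro--Shalika manner. Each step uses the mirabolic action of $\GL_{r}\times\GL_{r-1}$ on the characters to collapse orbits. Cuspidality kills the degenerate orbits. For the surviving orbit, the remaining Fourier coefficient of $\theta_{\tau_2}$ is exactly the $(2r)$-type coefficient $W_{\theta_{\tau_2},2r}$, i.e. the $(k,c)$-model of $\Delta(\tau_2,r)$ from \cite{CFGK}. The leftover sum over $P_{r-1,1}(k)$-cosets in the $\GL_r$ factor is absorbed into the Whittaker coefficient $f_{W_{2r-1}}$ of the section.

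This yields \eqref{eqn: unfolded integral}, and likewise \eqref{eqn: I-r-s,s1,s2} with $W_{(r,r);2r}(\cdot,s)$ the analogous degenerate Whittaker coefficient of $E(\cdot,s)$. The key input is that $\Delta(\tau_2,r)$ and $E(\cdot,s)$ share the same unipotent orbit $(2^r)$, so only one orbit contributes. Eulerianity then follows from uniqueness of the Whittaker model, of the $(k,c)$-model of $\Delta(\tau_2,r)$, and of the degenerate model of $I(s)$.

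\emph{Local computation.} I use the Iwasawa decomposition $g=ntk$ and reduce to a sum over the dominant torus $t=\varpi^\lambda$ modulo center:
\[\calZ_r=\sum_\lambda W^\circ_{\pi_{2r}}(\varpi^\lambda)W^\circ_{\Delta}(\varpi^\lambda)f_{W^\circ_{2r-1}}(\varpi^\lambda)\delta_{B_{2r}}^{-1}(\varpi^\lambda).\]
The three factors are evaluated as follows.
\begin{itemize}
\item $W^\circ_{\pi_{2r}}$ is given by the Shintani formula, i.e. a Schur polynomial $s_\lambda$ in the Satake parameters of $\pi_{2r}$.
\item For $W^\circ_{\Delta(\tau_2,r)}$ on the torus I would use the known formula (Cai--Friedberg--Gourevitch--Kaplan, or via $\Delta(\tau_2,r)\hookrightarrow\Ind(\tau_2|\cdot|^{\ldots}\otimes\cdots)$). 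It is supported on $\lambda$ that are "paired" and equals a Schur polynomial of $\GL_2$-parameters, suitably twisted.
\item $f_{W^\circ_{2r-1}}$ is the Whittaker functional on $\GL_{2r-1}$ of a degenerate induced section. By the Casselman--Shalika formula for degenerate principal series (or induction in stages), it equals a product of the normalising zeta factors in the denominator times a character sum of $\GL_r\times\GL_{r-1}$-Schur polynomials.
\end{itemize}
Alternatively, and more robustly, I would unfold the local integral into an iterated Rankin--Selberg $\GL_{2r}\times\GL_{2r}$ integral followed by a Jacquet--Shalika exterior square integral. The plan is to show that $\calZ_r$ equals the product of the normalising factors times a known Rankin--Selberg integral and the Jacquet--Shalika integral representing $L(\pi_{2r},\wedge^2,s_2)$. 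The latter is computed via the identity $\sum_\lambda s_\lambda(\alpha)\,X^{|\lambda|}=\prod(1-X\alpha_i\alpha_j)^{-1}\cdots$ for even-column partitions (Littlewood's identity).

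\emph{Main obstacle.} The step I expect to be hardest is the combinatorial identity at the end. We must show that
\[\sum_\lambda s_\lambda(\alpha)\,W_\Delta(\lambda)\,f(\lambda)\,\delta^{-1}(\lambda)\]
factors as $L(\pi\times\tau_2,\cdot)\,L(\pi,\wedge^2,\cdot)$ divided by the zeta product. I would first attempt it via Cauchy and Littlewood identities: the Cauchy identity $\sum s_\lambda(\alpha)s_\lambda(\beta)=\prod(1-\alpha_i\beta_j)^{-1}$ produces $L(\pi\times\tau_2)$, and Littlewood's identity produces the exterior square. The complication is that the $(r,r)$/$(2^r)$ support forces the intermediate sums to involve restricted Littlewood--Richardson expansions. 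For this I would set up induction on $r$, taking $r=2$ (Ginzburg's \cite[Proposition 2]{G}) as the base case, and peel off one $\GL_2$-block at each step.

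Part (b) runs parallel. $W_{(r,r);2r}(\cdot,s)$ has Schur expansion $s_{\lambda'}(q^{-s'},q^{s'})$-type, i.e. it behaves like $\Delta(\tau_2,r)$ with $\tau_2$ replaced by $\Ind(|\cdot|^{s-\ldots}\otimes|\cdot|^{-s+\ldots})$. This replaces $L(\pi\times\tau_2)$ by the two standard $L$-factors, with extra normalising factors $\prod_j\zeta_F(2rs-r+j)$.
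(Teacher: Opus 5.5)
Your outer skeleton is the paper's: unfold, reduce to the torus by Iwasawa, use Shintani for $W^\circ_{\pi_{2r}}$, get closed torus formulas for the other two factors, and finish with a Cauchy/Littlewood-type identity. Your handling of (b) also matches the paper: replace $\tau_2$ by $\Xi_2=\Ind(\mu\otimes\mu^{-1})$ with $\mu=|\cdot|^{rs-r/2}$, at the cost of the constant $\prod_j\zeta(2rs-r+j)^{-1}$. However, the two steps that carry the content are missing or misdescribed.

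First, $f_{W^\circ_{2r-1}}$ is not a Whittaker functional of $\GL_{2r-1}$. The character $\psi_{N_{2r}}^{\operatorname{even}}$ only involves the simple roots $(2i,2i+1)$, and the generic Whittaker functional on a representation induced from a character of $P_{r,r-1}$ is identically zero. So there is no Casselman--Shalika formula to quote.

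The paper instead reduces to rank one. It writes $w_0=\widetilde\sigma\widetilde w_L$ (and $w_0'=\sigma w_L$ for the other coefficients) and splits the upper-triangular integration variable into diagonal and strictly upper parts. The strictly upper part is an intertwining integral $A_{\widetilde\sigma}$, evaluated by Gindikin--Karpelevich. On $\diag(t_0,g_1,\dots,g_{r-1},t_r)$, $A_{\widetilde\sigma}f^\circ_{s_1,s_2}$ is a product of $\GL_2$ spherical vectors, so what remains is $r-1$ rank-one $\GL_2$ Jacquet integrals.

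This gives $f_{W^\circ_{2r-1}}(t)$ as a monomial times $\prod_{i=1}^{r-1}\frac{1-(x^2y^{-1})^{n_{2i}+1}}{1-x^2y^{-1}}$, zero unless all $n_{2i}\ge0$. Likewise it gives $W^\circ_{\Delta(\tau_2,r)}(t)=\delta_{P_{2^r}}^{1/4}(t)\prod_{i=1}^{r}W^\circ_{\tau_2}(\diag(a_{2i-1},1))$. That is a product of $r$ separate $\GL_2$ characters in the odd exponents, with no support condition on the even ones, not a single twisted $\GL_2$ Schur polynomial on ``paired'' $\lambda$. These exact shapes are what the final identity consumes.

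Second, the combinatorial identity, which you rightly call the crux, is left without its key idea. Multiplying the Cauchy and Littlewood generating functions does not finish, since the product must still be re-expanded in the $s_\lambda(\alpha)$. ``Induction on $r$, peeling off a $\GL_2$-block'' has no visible mechanism. The paper uses the skew Cauchy identity, as follows.

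Since $\det\alpha=1$, Littlewood gives $(1-y^r)\prod_{i<j}(1-y\alpha_i\alpha_j)^{-1}=\sum_\mu y^{|\mu|}s_{\eta(\mu)}(\alpha)$, with $\eta(\mu)=(\mu_1,\mu_1,\dots,\mu_{r-1},\mu_{r-1},0,0)$. With $u=xb$ and $v=xb^{-1}$, one then applies $s_{\eta}(\alpha)\prod_i(1-u\alpha_i)^{-1}(1-v\alpha_i)^{-1}=\sum_\lambda s_{\lambda/\eta}(u,v)s_\lambda(\alpha)$. A two-variable skew Schur function vanishes unless $\lambda_{2i}\ge\mu_i\ge\lambda_{2i+1}$, and then equals $(uv)^{\sum_i(\lambda_{2i}-\mu_i)}\prod_j s_{(n_{2j-1})}(u,v)$. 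Three things then match up:
\begin{enumerate}
\item The product $\prod_j s_{(n_{2j-1})}(u,v)=x^{\sum_j n_{2j-1}}\prod_j\chi^{\SL_2(\C)}_{(n_{2j-1},0)}(\diag(b,b^{-1}))$ is exactly the $W^\circ_{\Delta(\tau_2,r)}$ contribution (respectively $W^\circ_{(r,r);2r}$ with $b=q^{rs-r/2}$).
\item The geometric sum over $\mu_i$ reproduces the factor $\frac{1-(x^2y^{-1})^{n_{2i}+1}}{1-x^2y^{-1}}$ coming from $f_{W^\circ_{2r-1}}$.
\item Removing the $\lambda_{2r}$ full columns yields the numerator $1-x^2y^{r-1}=\zeta(2rs_1+(r-1)s_2-r+1)^{-1}$.
\end{enumerate}
Your ``alternative'' of splitting the spherical local integral into a Rankin--Selberg integral times a Jacquet--Shalika integral has nothing behind it.

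On the global side, your unfolding mechanism also differs from what is needed. The paper unfolds $E(\cdot,s_1,s_2)$ only to $P_{2r-1,1}$, so $F$ stays $\GL_{2r-1}(k)$-invariant and the Piatetski-Shapiro--Shalika sum collapses fully. The split of $\psi_{N_{2r}}$ into $\psi^{\operatorname{odd}}$ on $\phi$ and $\psi^{\operatorname{even}}$ on $F$ then comes from a Bump--Ginzburg constancy lemma using the orbit $(2^r)$ of $\phi$. The Bruhat cells $w_A\neq w_0$ die because $\psi_{N_{2r}}^{\operatorname{even}}$ is nontrivial on a root subgroup of $N_{w_A}$. Your route through the constant term along $U_{r,r-1,1}$ does not say what replaces these steps.
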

\begin{remark}
    Let $r = 2$. We note that up to a functional equation of the Eisenstein series $E(\cdot,s_1,s_2)$ defined in \eqref{eqn: Eisenstein series (s_1,s_2)}, the global integral $I_2(\varphi_{\pi_4}, s,s_1,s_2)$ in \eqref{eqn: global integral (s,s1,s2)} was studied in \cite{PS} by Pollack and Shah.
\end{remark}
The paper is organised as follows. In Section \ref{sec: Unfolding of global integrals} we unfold both global integrals and identify the Fourier coefficients in their resulting Euler product decomposition. Next, in Section \ref{sec: Unramified computation} we compute the unramified local integrals via reducing them to certain rank-one computations and the use of a Cauchy-Littlewood type identity.
\subsection{Acknowledgements}
I would like to thank Wee Teck Gan and Lei Zhang for providing advice and helpful discussions. Additionally, I would like to thank David Ginzburg for clarifications and insights on his work \cite{G}.

\section{Unfolding of global integrals}
\label{sec: Unfolding of global integrals}
In this section, we will provide details of the unfolding process of the global integrals to obtain \eqref{eqn: unfolded integral} and \eqref{eqn: I-r-s,s1,s2}. More precisely, we will prove the following proposition.
\begin{prop}
    \label{prop: unfolding}
    The global integrals $I_r(\varphi_{\pi_{2r}}, \theta_{\tau_2}, s_1,s_2)$ and $I_r(\varphi_{\pi_{2r}}, s,s_1,s_2)$  defined in \eqref{eqn: global integral tau2} and \eqref{eqn: global integral (s,s1,s2)} are Eulerian and for $\operatorname{Re}(s_1-s),\operatorname{Re}(2rs_1-s_2),\operatorname{Re}(s),\operatorname{Re}(s_1), \operatorname{Re}(s_2) \gg0$, they unfold to 
    \begin{align*}
        I_r(\varphi_{\pi_{2r}}, \theta_{\tau_2}, s_1,s_2)=&  \int_{Z_{2r}(\A) N_{2r}(\A) \sm \GL_{2r}(\A)} W_{\varphi_{\pi_{2r}}}^{\psi}(g) W_{\theta_{\tau_2},2r}(g)
        f_{W_{2r-1}}(g,s_1,s_2)\,dg,\\
        I_r(\varphi_{\pi_{2r}},s,s_1,s_2)=& \int_{Z_{2r}(\A) N_{2r}(\A) \sm \GL_{2r}(\A)}
        W_{\varphi_{\pi_{2r}}}^{\psi}(g) 
        W_{(r,r);2r}(g,s)
        f_{W_{2r-1}}(g,s_1,s_2)\,dg,
    \end{align*}
    where $W_{\varphi_{\pi_{2r}}}^\psi$ is as defined in \eqref{eqn: Whittaker coefficient pi-2r}, and for $g \in \GL_{2r}(\A)$, $ W_{\theta_{\tau_2},2r}(g)$, $W_{(r,r);2r}(g,s)$ and $f_{W_{2r-1}}(g, s_1,s_2)$ are defined by
\begin{align}
    \label{eqn: W-theta-2r}
    W_{\theta_{\tau_2},2r}(g) =& \int_{N_{2r}(k) \sm N_{2r}(\A)} \theta_{\tau_2}(n g) \psi_{N_{2r}}^{\operatorname{odd}}(n) \, dn,\\
    \label{eqn: W(r,r)(s)}
    W_{(r,r);2r}(g,s) =& \int_{U_{w_0'}(\A)} f(w_0'ug,s)\psi_{U_{w_0'}}(u)\, du,\\
    \label{f-W(2r-1)}
    f_{W_{2r-1}}(g, s_1,s_2) =& \int_{U_{w_0}(\A)} 
    f(w_0 n g, s_1, s_2) \psi_{N_{2r}}^{\operatorname{even}}(n) \, dn.
\end{align}
Here $U_{w_0'}$ and $U_{w_0}$ are subgroups of $N_{2r}$ given by
\begin{align*}
    U_{w_0'} =& \LL\{ u(X) = I_{2r} + \sum_{1 \leq i \leq j \leq r} x_{i,j} E_{2i-1,2j} \mid X = (x_{i,j}) \in (UT)_r  \RR\},\\
    U_{w_0} =& \LL\{
    n(u) = I_{2r} + \sum_{1 \leq i < j \leq r} u_{2i,2j-1} E_{2i,2j-1}
    \RR\},
\end{align*}
and $w_0, w_0' \in \GL_{2r}$ are permutation matrices corresponding to permutations $s_{w_0}$ and $s_{w_0'}$ respectively:
\begin{align*}
    s_{w_0} = \begin{pmatrix}
        1 & 2 & 3 & 4 & \cdots & 2r-1 & 2r\\
        1 & r+1 & 2 & r+2 & \cdots & r & 2r
    \end{pmatrix}, && 
    s_{w_0'} = \begin{pmatrix}
        1 & 2& 3 &4 & \cdots & 2r-1 & 2r\\
        r+1 & 1 & r+2 & 2 & \cdots & 2r & r
    \end{pmatrix}.
\end{align*}
Also, $\psi_{N_{2r}}^{\operatorname{even}}, \psi_{N_{2r}}^{\operatorname{odd}}$ and $\psi_{U_{w_0'}}$ are characters given by
\begin{align*}
    \psi_{N_{2r}}^{\operatorname{odd}}(n) =\psi\LL(\sum_{i=1}^{r} n_{2i-1,2i} \RR), && 
    \psi_{N_{2r}}^{\operatorname{even}}(n) = \psi\LL( \sum_{i=1}^{r-1} n_{2i,2i+1} \RR), && 
    \psi_{U_{w_0'}}(u(X)) = \psi(\tr X),
\end{align*}
for $n = (n_{i,j}) \in N_{2r}$ and $u(X) \in U_{w_0'}$.
\end{prop}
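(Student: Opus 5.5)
We unfold the Eisenstein series $E(g,s_1,s_2)$ first, treating the other automorphic form ($\theta_{\tau_2}$ or $E(g,s)$) as an auxiliary function. In the range $\operatorname{Re}(s_1),\operatorname{Re}(s_2),\operatorname{Re}(2rs_1-s_2)\gg0$ the series \eqref{eqn: Eisenstein series (s_1,s_2)} converges absolutely. We then write
\[
I_r=\int_{Z_{2r}(\A)P_{r,r-1,1}(k)\sm \GL_{2r}(\A)}\varphi_{\pi_{2r}}(g)\,\theta(g)\,f(g,s_1,s_2)\,dg ,
\]
where $\theta$ denotes $\theta_{\tau_2}$ or $E(\cdot,s)$. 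This absolute convergence is where the condition $\operatorname{Re}(s_1-s)\gg0$ enters in the second case: it controls the growth of $E(g,s)$ against the decay of $f$.

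Next we expand $\varphi_{\pi_{2r}}\theta$ along the unipotent radical of $P_{r,r-1,1}$. The natural route is an induction over the columns using the abelian groups $U_i$ of \eqref{eqn: U-i}. At each step we take the Fourier expansion of $\varphi_{\pi_{2r}}$ along $U_i$ (cuspidality kills the constant terms that would otherwise survive), and we use the remaining rational Levi elements of $P_{r,r-1,1}(k)$ to collapse the sum over nontrivial characters to a single orbit. This is the standard mirabolic/Piatetski-Shapiro–Shalika argument, run in an interlaced order dictated by $w_0$. The bookkeeping is organised by the permutation $s_{w_0}$, which interleaves the blocks $\{1,\dots,r\}$ and $\{r+1,\dots,2r\}$. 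Conjugating by $w_0$ moves $P_{r,r-1,1}$ into a position where the surviving characters on $N_{2r}$ are $\psi_{N_{2r}}$ for $\varphi_{\pi_{2r}}$, the "odd" character $\psi_{N_{2r}}^{\odd}$ for $\theta$, and the "even" character $\psi^{\ev}_{N_{2r}}$ integrated against $f$ over $U_{w_0}$. The inverse characters cancel because $\psi_{N_{2r}}=\psi^{\odd}_{N_{2r}}\psi^{\ev}_{N_{2r}}$ on the simple root entries.

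At this stage the integral should take the form
\[
\int_{Z_{2r}(\A)N_{2r}(\A)\sm\GL_{2r}(\A)}W^{\psi}_{\varphi_{\pi_{2r}}}(g)\int_{U_{w_0}(\A)}\int_{[N_{2r}]}\theta(ng)\,\psi^{\odd}_{N_{2r}}(n)\,dn\; f(w_0ug,s_1,s_2)\,\psi^{\ev}_{N_{2r}}(u)\,du\,dg .
\]
Collapsing the $U_{w_0}(k)$-sum against the $U_{w_0}(\A)$-integral gives $f_{W_{2r-1}}$. The name reflects that, restricted to the $\GL_{r-1}$ and $\GL_r$ Levi pieces, $f$ produces a Whittaker-type function.

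It remains to identify the $\theta$-coefficient. For $\theta_{\tau_2}$ it is by definition $W_{\theta_{\tau_2},2r}$ in \eqref{eqn: W-theta-2r}. For $E(g,s)$ we unfold the coefficient along $\psi^{\odd}_{N_{2r}}$ via the Bruhat decomposition $P_{r,r}(k)\sm\GL_{2r}(k)/N_{2r}(k)$. For every Weyl representative other than $w_0'$, the character $\psi^{\odd}_{N_{2r}}$ is nontrivial on the stabilising subgroup $N_{2r}\cap w^{-1}P_{r,r}w$, so that term vanishes. The cell of $w_0'$, which pairs $r+i$ with $2i-1$ and $i$ with $2i$, is the unique open relevant one. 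It contributes $\int_{U_{w_0'}(\A)}f(w_0'ug,s)\psi(\tr X)\,du=W_{(r,r);2r}(g,s)$, since $\psi^{\odd}$ restricted to $U_{w_0'}$ reads off exactly the diagonal entries $x_{i,i}$.

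Eulerianity then follows. $W^\psi_{\varphi_{\pi_{2r}}}$ factors by uniqueness of Whittaker models. $f_{W_{2r-1}}$ and $W_{(r,r);2r}$ factor because $f$ is factorizable and the integrals are adelic. $W_{\theta_{\tau_2},2r}$ factors by the uniqueness of $(k,c)$-models for $\Delta(\tau_2,r)$ from \cite{CFGK}.

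The main obstacle is the second step: carrying out the root exchanges and Fourier expansions in the correct order so that only the single $w_0$-orbit survives. Here one must check carefully, for the intermediate orbits, that the vanishing comes from either cuspidality of $\pi_{2r}$ or the vanishing of the corresponding coefficients of $\Delta(\tau_2,r)$ (respectively the degenerate $E(\cdot,s)$). These coefficients vanish because their orbits exceed the unipotent orbit $(2^r)$, respectively $(2^r)$ for $I(s)$. I would first do the case $r=2$ by hand, following \cite{G}, to fix the ordering, and then induct on $r$.
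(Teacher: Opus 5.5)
Your proposal has a genuine gap at exactly the step you flag as the main obstacle, and the mechanism you sketch there does not work. Once $E(\cdot,s_1,s_2)$ is unfolded all the way to $P_{r,r-1,1}(k)$, the only rational symmetry left is $P_{r,r-1,1}(k)$. That group does not collapse the character sums to a single orbit. Already for $U_{2r-1}$, an element $\begin{pmatrix}A&B\\&D\end{pmatrix}\in P_{r,r-1}(k)$ sends a character $(\xi',\xi'')\in k^r\times k^{r-1}$ to $(\xi'A,\ \xi'B+\xi''D)$, so there are two nonzero orbits: $\xi'\neq0$, and $\xi'=0\neq\xi''$.

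The orbit with $\xi'\neq0$ is killed neither by cuspidality of $\varphi_{\pi_{2r}}$ nor by the $(2^r)$ orbit of $\theta$. Indeed, a nontrivial coefficient along $U_{2r-1}$ is attached to the orbit $(2,1^{2r-2})\leq(2^r)$. So the terms proliferate, and the two vanishing mechanisms you invoke do not dispose of them. Your displayed ``intermediate form'' is just the target identity asserted. More basically, the difficulty is not expanding $\varphi_{\pi_{2r}}$. It is converting $\int_{[N_{2r}]}$ of the \emph{product} of $\theta$ and the Eisenstein data against $\psi_{N_{2r}}$ into a product of two separate coefficients. The observation $\psi_{N_{2r}}=\psi^{\operatorname{odd}}_{N_{2r}}\psi^{\operatorname{even}}_{N_{2r}}$ alone does not achieve this.

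The paper does it with two ingredients missing from your plan.

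First, it unfolds $E(\cdot,s_1,s_2)$ only to $P_{2r-1,1}(k)$, keeping the $\GL_{2r-1}$-Eisenstein series $F(g,s_1,s_2)=\sum_{\gamma\in P_{r,r-1,1}(k)\sm P_{2r-1,1}(k)}f(\gamma g,s_1,s_2)$. Since $P_{2r-1,1}=Z_{2r}U_{2r-1}\GL_{2r-1}$, the full Piatetski-Shapiro--Shalika sum over $N_{2r-1}(k)\sm\GL_{2r-1}(k)$ collapses. This gives $\int_{Z_{2r}(\A)N_{2r}(k)\sm\GL_{2r}(\A)}W^\psi_{\varphi_{\pi_{2r}}}\,\theta\,F\,dg$.

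Second, it splits the $[N_{2r}]$-integral of $\theta F$ using the Bump--Ginzburg invariance statement (Lemma \ref{lemma: prop2.4 of Bump-Ginzburg}, Corollary \ref{cor: independence}):
$F$ is left $U_{2r-1}(\A)$-invariant; the partial coefficient $A^\theta_m$ is left $U_{2r-2m}(\A)$-invariant; and $B_m$ is left $U_{2r-2m+1}(\A)$-invariant. Each invariance is proved by expanding along the next $U_i$, whose nontrivial characters \emph{do} form a single $\GL_{2r-2m}(k)$-orbit with mirabolic stabiliser. The resulting coefficient vanishes because the unipotent orbit is small, namely $(2^r)$ for $\theta$. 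Alternating these invariances lets each $[U_i]$ act on only one factor, producing $W_{\theta,2r}(g)\int_{[N_{2r-1}]}F\,\psi^{\operatorname{even}}_{N_{2r}}$.

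Only then is $F$ unfolded by Bruhat cells on $\GL_{2r-1}$. The cells $w_A\neq w_0$ vanish by a pure character argument on $f$: $\psi^{\operatorname{even}}_{N_{2r}}$ is nontrivial on some $U_{p,p+1}\subset N_{w_A}$ with $p$ even. This is not cuspidality or an orbit consideration. Your selection of the $w_0'$ cell for $E(\cdot,s)$ and your Eulerianity remarks are fine.
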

As noted in \cite{G}, the proof of Proposition \ref{prop: unfolding} follows from similar arguments in \cite{BG}. For completeness and the reader's convenience, we will provide details of the proof of the above proposition. As in \cite{BG}, we will first establish some preliminary results. Assuming $\operatorname{Re}(s_1), \operatorname{Re}(s_2) \gg0$, we can consider the following expression for the Eisenstein series $E(\cdot,s_1,s_2)$ defined in \eqref{eqn: Eisenstein series (s_1,s_2)},
\begin{align}
        \label{eqn: Eisenstein series II}
        E(g,s_1,s_2) = \sum_{\gamma_1 \in P_{2r-1,1}(k) \sm \GL_{2r}(k)} F(\gamma_1 g, s_1,s_2), 
    \end{align}
where
\begin{align}
    \label{eqn: F auxiliary function}
        F(g,s_1,s_2) = \sum_{\gamma_2 \in P_{r,r-1,1}(k) \sm P_{2r-1,1}(k)} f(\gamma_2 g, s_1, s_2).
\end{align}
Let $\phi$ be either $\theta_{\tau_2} \in \Delta(\tau_2, r)$ or the Eisenstein series $E(\cdot, s)$ defined in \eqref{eqn: Eisenstein series (s)}. For $1 \leq m \leq r-1$ and $g \in \GL_{2r}(\A)$, we define
\begin{align*}
     A_m^\phi(g) = \int_{[U_{2r-2m+1}]} \cdots \int_{[U_{2r-2}]} &\int_{[U_{2r-1}]} \phi(u_{2r-1}u_{2r-2}\cdots u_{2r-2m+1}g)\\
    \cdot&\psi_{N_{2r}}\LL( u_{2r-1} u_{2r-3}\cdots  u_{2r-2m+1} \RR)\, du_{2r-1}du_{2r-2}\cdots du_{2r-2m+1}.
\end{align*}
Furthermore, for $2 \leq m \leq r$ and $g \in \GL_{2r}(\A)$, we set
\begin{align*}
   B_m(g,s_1,s_2) =\int_{[U_{2r-2m+2}]} &\cdots \int_{[U_{2r-3}]}
    \int_{[U_{2r-2}]} F(
    u_{2r-2}u_{2r-3}\cdots u_{2r-2m+2}
    g,s_1,s_2)\\
    \cdot&\psi_{N_{2r-1}}(u_{2r-2}u_{2r-4}\cdots u_{2r-2m+2})\, du_{2r-2}\, du_{2r-3}\cdots du_{2r-2m+2}.
\end{align*}
\begin{lem}
    \label{lemma: prop2.4 of Bump-Ginzburg}
    Let $g \in \GL_{2r}(\A)$ and let $\phi$ denote either $\theta_{\tau_2}$ or $E(\cdot,s)$.
    \begin{enumerate}
        \item [(a)]
        The function $u \mapsto A_m^\phi(ug)$ is constant on $U_{2r-2m}(\A)$ for all $1\leq m \leq r-1$;
        \item [(b)]
        the function $u \mapsto B_m(u g,s_1,s_2)$ is constant on $U_{2r-2m+1}(\A)$ for all $2 \leq m \leq r$.
    \end{enumerate}
\end{lem}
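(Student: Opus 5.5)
The approach follows \cite[Proposition 2.4]{BG}. Each of $A_m^\phi$ and $B_m$ is an iterated period over a product of the abelian groups $U_j$. The goal is to show that left translation by an element of the next group down, $U_{2r-2m}(\A)$ (resp.\ $U_{2r-2m+1}(\A)$), can be absorbed. This uses two facts: the automorphy of $\phi$ (resp.\ of $F$) under rational points, and the invariance of the measures and characters under conjugation. I then argue by Fourier expansion.

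\textbf{Part (a).} Fix $m$ and write $V_m = U_{2r-1}U_{2r-2}\cdots U_{2r-2m+1}$. This is a unipotent group normalised by $U_{2r-2m}$, and the character $\psi_{N_{2r}}$ restricted to the odd-indexed $U_j$ extends to $V_m$. The function $u\mapsto A_m^\phi(ug)$ on $U_{2r-2m}(\A)$ is left $U_{2r-2m}(k)$-invariant. To see this, take $\gamma\in U_{2r-2m}(k)$, use $\phi(\gamma h)=\phi(h)$, and conjugate $\gamma$ past $V_m$. The conjugation $v\mapsto \gamma^{-1} v\gamma$ preserves $V_m(k)\backslash V_m(\A)$ and its measure. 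One must check that it preserves the character. The only entries of $v$ affected are in rows $\leq 2r-2m$ and columns $\geq 2r-2m+1$, while $\psi$ only reads the entries $(2r-2m+1,2r-2m+2),(2r-2m+3,2r-2m+4),\dots$, which are untouched.

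So $A_m^\phi(ug)$ is a function on the compact abelian group $[U_{2r-2m}]\cong (k\backslash\A)^{2r-2m}$. I expand it in Fourier series:
\begin{align*}
A_m^\phi(ug)=\sum_{\xi\in k^{2r-2m}} \int_{[U_{2r-2m}]} A_m^\phi(vug)\,\psi_\xi^{-1}(v)\,dv\,\psi_\xi(u).
\end{align*}
It then suffices to show that every nontrivial coefficient vanishes. The torus element stabilising $\psi_{N_{2r}}$ on $V_m$, together with the action of a suitable $\GL_{2r-2m}(k)$ (embedded in the upper-left block, which normalises $U_{2r-2m}$ and fixes the character on $V_m$ restricted to the lower block), acts transitively on the nonzero $\xi$. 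So each nonzero coefficient is conjugate to the one with $\xi$ concentrated in the last coordinate, i.e.\ the character $\psi(u_{2r-2m,2r-2m+1})$.

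For that coefficient, the total integral over $U_{2r-2m}\cdot V_m$ with the character $\psi(n_{2r-2m,2r-2m+1}+n_{2r-2m+1,2r-2m+2}+n_{2r-2m+3,2r-2m+4}+\cdots)$ is a Fourier coefficient of $\phi$ attached to a unipotent orbit too large for $\phi$. For $\phi=\theta_{\tau_2}$, this vanishes by the characterising property of the generalised Speh representation $\Delta(\tau_2,r)$: its Fourier coefficients vanish for orbits greater than or not comparable to $(2^r)$ \cite{CFGK}. Here the two consecutive simple-root entries $(2r-2m,2r-2m+1)$ and $(2r-2m+1,2r-2m+2)$ force a Jordan block of size $\geq3$. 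For $\phi=E(\cdot,s)$, the same vanishing holds because the Eisenstein series induced from $P_{r,r}$ with trivial inducing data has maximal orbit $(2^r)$. One can also show it directly: unfold $E(\cdot,s)$ and note that every double coset $P_{r,r}\backslash \GL_{2r}/\cdots$ contributes an integral of a character over a subgroup on which it is nontrivial.

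\textbf{Part (b).} The argument is the same with $\phi$ replaced by $F$. The function $F$ is left $P_{2r-1,1}(k)$-invariant, and restricted to the upper $\GL_{2r-1}$ block it is an Eisenstein series on $\GL_{2r-1}$ induced from $P_{r,r-1}$ with trivial data. The relevant $U_j$ for $j\leq 2r-2$ sit inside that block, and $U_{2r-2m+1}$ embeds there as well. The nontrivial Fourier coefficients then vanish for the same orbit reason, now inside $\GL_{2r-1}$, where the maximal orbit is $(2^{r-1}1)$.

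\textbf{Main obstacle.} I expect the hard step to be the vanishing of the nontrivial Fourier coefficients. This requires the rigorous identification of those coefficients with coefficients attached to larger orbits, using root exchange/conjugation as in \cite{BG} and the results of \cite{CFGK}. The remaining steps are formal bookkeeping of invariance.
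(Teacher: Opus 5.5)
Your proposal is correct and follows essentially the paper's argument. Both Fourier-expand along $U_{2r-2m}$ (resp.\ $U_{2r-2m+1}$), use the transitivity of $\GL_{2r-2m}(k)$ on nontrivial characters (the paper writes this as a sum over $P'_{2r-2m}(k)\backslash \GL_{2r-2m}(k)$) to reduce to the character $\psi(u_{2r-2m,2r-2m+1})$, and kill that coefficient because $\OO(\phi)=(2^r)$, resp.\ $(2^{r-1}1)$ for $F$ on $\GL_{2r-1}$.

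The only slip is cosmetic and does not affect the conclusion: your displayed expansion double-counts $\psi_\xi(u)$. Drop either the $u$ inside $A_m^\phi(vug)$ or the outer factor $\psi_\xi(u)$.
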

\begin{proof}
    The proof follows from the same argument as \cite[Proposition 2.3]{BG}. We will only provide a proof of identity $(a)$ as the same argument holds for identity $(b)$. Let $1 \leq m \leq r-1$, we define
    \begin{align*}
        F_m(g) = \int_{[U_{2r-2m+1,2r}]}\cdots \int_{[U_{2r-2,2r}]} \int_{[U_{2r-1,2r}]}
        &\phi(u_{2r-1}u_{2r-2}u_{2r-3}\cdots u_{2r-2m+1}g) \\
        \cdot&
        \psi_{N_{2r}}(u_{2r-1}u_{2r-3}u_{2r-5}\cdots u_{2r-2m+1})\,\, du_{2r-1}\cdots \, du_{2r-2m+1},
    \end{align*}
    and 
    \begin{align*}
        G_m(g) = F_m(g) - \int_{[U_{2r-2m}]} F_m(u_{2r-2m} g) \, du_{2r-2m}.
    \end{align*}
    We will show that $G_m(g) =0$. By suitable change of variables, the function $u_{2r-2m} \mapsto F_m(u_{2r-2m}g)$ is a well-defined function of $U_{2r-2m}(k) \sm U_{2r-2m}(\A)$ and by a standard Fourier expansion argument we have
    \begin{align*}
        F_m(g) = \int_{[U_{2r-2m}]} F_{m}(u_{2r-2m} g) \, du_{2r-2m} 
        + \sum_{\chi} \int_{[U_{2r-2m}]}
        F_{m}(u_{2r-2m} g) \chi(u_{2r-2m})\, du_{2r-2m},
    \end{align*}
    where the sum is over all nontrivial characters $\chi$ on $[U_{2r-2m}]$. Then, we can write $G_m(g)$ as
    \begin{align*}
        G_m(g)
        =& 
        \sum_{\gamma \in P'_{2r-2m}(k) \sm \GL_{2r-2m}(k)}
        \int_{[U_{2r-2m}]}
        F_m(u_{2r-2m} \begin{pmatrix}
            \gamma & \\ & I_{2m}
        \end{pmatrix}g) \psi_{N_{2r}}(u_{2r-2m}) \, du_{2r-2m},
    \end{align*}
    where for $n\geq 2$, $P_n'\subset \GL_n$ is the usual mirabolic subgroup of $\GL_n$ given by
    \begin{align*}
        P_n' = \LL\{ \begin{pmatrix}
            h & v \\ & 1     
        \end{pmatrix} \mid h \in \GL_{n-1}, v \in M_{n-1,1} \RR\}.
    \end{align*}
    Thus, to show that $G_m(g) =0$, it suffices to show that 
    \begin{align*}
        F_m^\ast(\gamma, g) =
         \int_{[U_{2r-2m}]}
         F_m(u_{2r-2m} \begin{pmatrix}
            \gamma & \\ & I_{2m}
        \end{pmatrix}g) \psi_{N_{2r}}(u_{2r-2m}) \, du_{2r-2m}
    \end{align*}
    vanishes for all $\gamma \in \GL_{2r-2m}(k),g \in \GL_{2r}(\A)$. Indeed, from the definition of $F_m$, we have
    \begin{align*}
        F_m^\ast(\gamma, g) = \int_{[U_{2r-2m}]} \int_{[U_{2r-2m+1}]} \cdots &\int_{[U_{2r-2}]} 
        \int_{[U_{2r-1}]}
        \phi(u_{2r-1} u_{2r-2}\cdots 
        u_{2r-2m+1} u_{2r-2m}\begin{pmatrix}
            \gamma & \\ & I_{2m} 
        \end{pmatrix} g)\\
        \cdot&\psi_{N_{2r}}(u_{2r-1}u_{2r-3}\cdots u_{2r-2m+1} u_{2r-2m})\, du_{2r-1}\, du_{2r-2}\cdots du_{2r-2m}.
    \end{align*}
    Moreover, since $\OO(\Delta(\tau_2,r)) = (2^r)$ and $\OO(E(\cdot,s))= (2^r)$ as established in \cite{G06b} (see also \cite{JL}) and \cite{Cai} respectively, we conclude $F_m^\ast(\gamma, g)$ evaluates to zero for all $\gamma \in \GL_{2r-2m}(k)$, $g \in \GL_{2r}(\A)$ and $\phi \in \{\theta_{\tau_2}, E(\cdot,s)\}$. Consequently, we have $G_m(g) =0$ and this concludes the proof.
\end{proof}
As a consequence, we have the following corollary.
\begin{cor}
Let $g \in \GL_{2r}(\A)$ and let $\phi$ denote either $\theta_{\tau_2}$ or $E(\cdot,s)$.
    \label{cor: independence}
    \begin{enumerate}
        \item [(a)] For $1 \leq m \leq r-1$, we have
        \begin{align*}
        A_m^\phi(g) = \int_{[U_{2r-2m}]}\int_{[U_{2r-2m+1}]} &\cdots \int_{[U_{2r-2}]} \int_{[U_{2r-1}]} \phi(u_{2r-1}u_{2r-2}\cdots u_{2r-2m+1} u_{2r-2m} g)\\
    \cdot&\psi_{N_{2r}}\LL( u_{2r-1} u_{2r-3}\cdots  u_{2r-2m+1} \RR)\, du_{2r-1}du_{2r-2}\cdots du_{2r-2m+1}\, du_{2r-2m},
    \end{align*}

        \item [(b)] For $2 \leq m \leq r$, we have
        \begin{align*}
            B_m(g,s_1,s_2)=\int_{[U_{2r-2m+1}]}\int_{[U_{2r-2m+2}]} &\cdots \int_{[U_{2r-3}]}
    \int_{[U_{2r-2}]} F(u_{2r-2} u_{2r-3} \cdots u_{2r-2m+2} u_{2r-2m+1}g,s_1,s_2)\\
    \cdot&\psi_{N_{2r-1}}(u_{2r-2}u_{2r-4}\cdots u_{2r-2m+2})\, du_{2r-2}\cdots du_{2r-2m+1}.
        \end{align*}
    \end{enumerate}
\end{cor}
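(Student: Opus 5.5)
The corollary follows by averaging the constancy statement of Lemma \ref{lemma: prop2.4 of Bump-Ginzburg} over a compact quotient of volume one. We treat part (a) in detail; part (b) is word-for-word the same with $F$, $\psi_{N_{2r-1}}$, $B_m$ and part (b) of the lemma in place of $\phi$, $\psi_{N_{2r}}$, $A_m^\phi$ and part (a).

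Fix $1 \leq m \leq r-1$ and $g \in \GL_{2r}(\A)$. By Lemma \ref{lemma: prop2.4 of Bump-Ginzburg}(a), the function $u \mapsto A_m^\phi(ug)$ is constant on $U_{2r-2m}(\A)$, so $A_m^\phi(u_{2r-2m}g) = A_m^\phi(g)$ for every $u_{2r-2m} \in U_{2r-2m}(\A)$. In particular it descends to $[U_{2r-2m}]$. Since $U_{2r-2m} \cong \G_a^{2r-2m}$ and $[U_{2r-2m}]$ carries the product of the quotient measures on $k \sm \A$, this quotient has volume one. Integrating the identity over $u_{2r-2m} \in [U_{2r-2m}]$ therefore gives
\begin{align*}
A_m^\phi(g) = \int_{[U_{2r-2m}]} A_m^\phi(u_{2r-2m} g)\, du_{2r-2m}.
\end{align*}
Substituting the definition of $A_m^\phi(u_{2r-2m}g)$ gives exactly the iterated integral in the statement, with $\phi$ evaluated at $u_{2r-1}\cdots u_{2r-2m+1}u_{2r-2m}g$. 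The character $\psi_{N_{2r}}(u_{2r-1}u_{2r-3}\cdots u_{2r-2m+1})$ is unchanged, since no character is attached to the new variable $u_{2r-2m}$.

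The only points needing care are well-definedness and the interchange of integrals. The lemma gives constancy on all of $U_{2r-2m}(\A)$, so no invariance under $U_{2r-2m}(k)$ has to be checked separately. Each integrand is continuous on a compact domain: $\phi$ is smooth of moderate growth, and the defining series for $E(\cdot,s)$ and for $F$ converge absolutely in the range $\operatorname{Re}(s), \operatorname{Re}(s_1), \operatorname{Re}(s_2) \gg 0$ assumed here. Fubini therefore applies and the nested integral may be written in the stated order. No step is a genuine obstacle; the substance is entirely in Lemma \ref{lemma: prop2.4 of Bump-Ginzburg}.
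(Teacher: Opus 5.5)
Your proposal is correct and follows the same argument as the paper. You integrate the constancy statement of Lemma~\ref{lemma: prop2.4 of Bump-Ginzburg} over the volume-one quotient $[U_{2r-2m}]$ (resp.\ $[U_{2r-2m+1}]\subset \GL_{2r-1}$ for part (b)) and then substitute the definition of $A_m^\phi$ (resp.\ $B_m$); your extra remarks on continuity and Fubini are harmless care that the paper omits.
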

\begin{proof}
    Since the measures are chosen such that the volume of $[U_{2r-2m}]$ is one for $U_{2r-2m} \subset \GL_{2r}$, $1 \leq m \leq r-1$, and also the volume of $[U_{2r-2m+1}]$ is one for $U_{2r-2m+1} \subset \GL_{2r-1}$ for $2 \leq m \leq r$, the identities above follow from Lemma \ref{lemma: prop2.4 of Bump-Ginzburg}.
\end{proof}
With these, we can proceed with the proof of Proposition \ref{prop: unfolding}. Again, we will let $\phi$ denote either $\theta_{\tau_2}$ or $E(\cdot, s)$ and consider the integral 
\begin{align*}
    I_r(\varphi_{\pi_{2r}}, \phi, s_1, s_2)= \int_{Z_{2r}(\A) \GL_{2r}(k) \sm \GL_{2r}(\A)}
    \varphi_{\pi_{2r}}(g) \phi(g) E(g, s_1, s_2) \, dg,
\end{align*}
such that $I_r(\varphi_{\pi_{2r}}, E(\cdot,s), s_1, s_2) = I_r(\varphi_{\pi_{2r}}, s, s_1, s_2)$ given in \eqref{eqn: global integral (s,s1,s2)}. In the region of absolute convergence, we can unfold the Eisenstein series $E(\cdot, s_1, s_2)$ via \eqref{eqn: Eisenstein series II} to obtain
\begin{align*}
    I_r(\varphi_{\pi_{2r}}, \phi, s_1, s_2)= \int_{Z_{2r}(\A) P_{2r-1,1}(k) \sm \GL_{2r}(\A)}
    \varphi_{\pi_{2r}}(g) \phi(g) F(g, s_1, s_2) \, dg.
\end{align*}
Next, we recall the Shalika-Piatetski-Shapiro expansion for the cusp form $\varphi_{\pi_{2r}}$: 
\begin{align*}
    \varphi_{\pi_{2r}}(g) = \sum_{\gamma \in N_{2r-1}(k) \sm \GL_{2r-1}(k)} W_{\varphi_{\pi_{2r}}}^\psi(\begin{pmatrix}
        \gamma & \\ & 1
    \end{pmatrix} g), && g \in \GL_{2r}(\A).
\end{align*}
Since $P_{2r-1,1} = Z_{2r} U_{2r-1} \GL_{2r-1}$ where $U_{2r-1}$ is as given in \eqref{eqn: U-i} and $\GL_{2r-1}$ is realised as a subgroup of $\GL_{2r}$ via
\begin{align*}
    \GL_{2r-1} \cong \LL\{ \begin{pmatrix}
        g & \\ & 1
    \end{pmatrix} \mid g \in \GL_{2r-1} \RR\},
\end{align*}
we can write the global integral $I_r(\varphi_{\pi_{2r}}, \phi, s_1,s_2)$ as 
\begin{align*}
    I_r(\varphi_{\pi_{2r}}, \phi, s_1,s_2)
    =& \int_{Z_{2r}(\A) N_{2r}(k) \sm \GL_{2r}(\A)}
    W_{\varphi_{\pi_{2r}}}^\psi(g) \phi(g) 
    F(g,s_1,s_2)\, dg.
\end{align*}
Writing $N_{2r} =U_{2r-1}  \cdots U_{2} U_1$, then following the definition of $F(g,s_1,s_2)$ in \eqref{eqn: F auxiliary function}, we have $I_r(\varphi_{\pi_{2r}}, \phi, s_1,s_2) $ equals to 
\begin{align*}
    &\int_{Z_{2r}(\A) U_{2r-1}(\A) \cdots U_{2}(k) U_{1}(k)\sm \GL_{2r}(\A)}
    W_{\varphi_{\pi_{2r}}}^\psi(g) \LL(\int_{[U_{2r-1}]} \phi(u_{2r-1} g) \psi_{N_{2r}}(u_{2r-1})\, du_{2r-1}\RR)
    F(g, s_1,s_2) \, dg.
\end{align*}
Next, factorising the integral over $U_{2r-2}$ we have
\begin{align}
    \label{eqn: I-r integral II}
    I_r(\varphi_{\pi_{2r}},& \phi, s_1,s_2) =
    \int_{Z_{2r}(\A) U_{2r-1}(\A) U_{2r-2}(\A) U_{2r-3}(k) \cdots U_1(k)\sm \GL_{2r}(\A)} W_{\varphi_{\pi_{2r}}}^\psi(g)\\
    \cdot&
    \int_{[U_{2r-2}]} \int_{[U_{2r-1}]}
    \phi(u_{2r-1}u_{2r-2}g) 
    F(u_{2r-2}g,s_1,s_2)
    \psi_{N_{2r}}(u_{2r-1}u_{2r-2})\, du_{2r-1}\,du_{2r-2}\, dg.\notag
\end{align}
By Corollary \ref{cor: independence}, we have
\begin{align*}
    \int_{[U_{2r-1}]} \int_{[U_{2r-2}]} 
    \phi(u_{2r-1}u_{2r-2}g) \psi_{N_{2r}}(u_{2r-1})
    \, du_{2r-1}\,du_{2r-2} =
    \int_{[U_{2r-1}]} \phi(u_{2r-1}u_{2r-2}g) 
    \psi_{N_{2r}}(u_{2r-1})\, du_{2r-1},
\end{align*}
and substituting this identity into \eqref{eqn: I-r integral II}, we obtain
\begin{align*}
     &I_r(\varphi_{\pi_{2r}}, \phi, s_1,s_2) =
     \int_{Z_{2r}(\A) U_{2r-1}(\A) U_{2r-2}(\A) U_{2r-3}(k) \cdots U_1(k)\sm \GL_{2r}(\A)} W_{\varphi_{\pi_{2r}}}^\psi(g) \\
     \cdot&
     \LL( \int_{[U_{2r-2}]} \int_{[U_{2r-1}]} \phi(u_{2r-1}u_{2r-2}g) \psi_{N_{2r}}(u_{2r-1})\, du_{2r-1}\, du_{2r-2} \RR)
     \int_{[U_{2r-2}]} F(u_{2r-2}g,s_1,s_2)\,\psi_{N_{2r}}(u_{2r-2})\, du_{2r-2}.
\end{align*}
Next, proceeding with factoring the integral over $U_{2r-3}$, we have
\begin{align}
    \label{eqn: I-r integral III}
    I_r(&\varphi_{\pi_{2r}}, \phi, s_1,s_2) =
    \int_{Z_{2r}(\A) U_{2r-1}(\A) U_{2r-2}(\A) U_{2r-3}(\A) U_{2r-4}(k) \cdots U_{1}(k) \sm\GL_{2r}(\A)} W_{\varphi_{\pi_{2r}}}^\psi(g) \\
    \cdot&\notag\int_{[U_{2r-3}]}\LL( \int_{[U_{2r-2}]} \int_{[U_{2r-1}]}
    \phi(u_{2r-1}u_{2r-2}u_{2r-3}g) \psi_{N_{2r}}(u_{2r-1})\, du_{2r-1}\, du_{2r-2}\RR)\\
    \cdot&\LL( \int_{[U_{2r-2}]} F(u_{2r-2} u_{2r-3}g, s_1,s_2)\psi_{N_{2r}}(u_{2r-2})\, du_{2r-2} \RR)\psi_{N_{2r}}(u_{2r-3})\, du_{2r-3}.\notag
\end{align}
Similarly, from Corollary \ref{cor: independence} we have
\begin{align*}
    \int_{[U_{2r-2}]} F(u_{2r-2} u_{2r-3}g,s_1,s_2)&\psi_{N_{2r}}(u_{2r-2})\, du_{2r-2}\\
    =& \int_{[U_{2r-3}]} \int_{[U_{2r-2}]}
    F(u_{2r-2}u_{2r-3}g,s_1,s_2)
    \psi_{N_{2r}}(u_{2r-2})\, du_{2r-2}\,
    du_{2r-3},
\end{align*}
and substituting this identity into \eqref{eqn: I-r integral III}, we get
\begin{align*}
    I_r(\varphi_{\pi_{2r}},\phi,s_1,s_2) =
    &\int_{Z_{2r}(\A) U_{2r-1}(\A) U_{2r-2}(\A) U_{2r-3}(\A) U_{2r-4}(k) \cdots U_1(k) \sm \GL_{2r}(\A)}
    W_{\varphi_{\pi_{2r}}}^\psi(g)\\
    \cdot&\int_{[U_{2r-3}]} \int_{[U_{2r-2}]} \int_{[U_{2r-1}]}
    \phi(u_{2r-1}u_{2r-2}u_{2r-3}g)
    \psi_{N_{2r}}(u_{2r-1}u_{2r-3})\, du_{2r-1}\,du_{2r-2}\,du_{2r-3}\\
    \cdot&\int_{[U_{2r-3}]} \int_{[U_{2r-2}]}
    F(u_{2r-2}u_{2r-3}g, s_1,s_2) 
    \psi_{N_{2r}}(u_{2r-2})\, du_{2r-2}\, du_{2r-3}.
\end{align*}
Then, proceeding in the same manner, we obtain
\begin{align*}
    I_r(&\varphi_{\pi_{2r}},\phi,s_1,s_2) =
    \int_{Z_{2r}(\A) N_{2r}(\A) \sm \GL_{2r}(\A)}
    W_{\varphi_{\pi_{2r}}}^\psi(g)   
    W_{\phi, 2r}(g) \\
    \cdot&\int_{[U_1]} \cdots \int_{[U_{2r-3}]} \int_{[U_{2r-2}]}
    F(u_{2r-2}u_{2r-3}\cdots u_1g,s_1,s_2)
    \psi_{N_{2r}}(u_{2r-2}u_{2r-4}\cdots u_2)\, du_{2r-2}\, du_{2r-3}\cdots du_1,
\end{align*}
where $W_{\phi, 2r}(g)$ is given by
\begin{align*}
    W_{\phi, 2r}(g) = \int_{[N_{2r}]} \phi(n g) \psi_{N_{2r}}^{\operatorname{odd}}(n) \, dn.
\end{align*}
From the definition of $N_{2r-1} \subset \GL_{2r-1}$ and $\psi_{N_{2r}}^{\operatorname{even}}$ we have
\begin{align*}
    I_r(\varphi_{\pi_{2r}},\phi,s_1,s_2) =
    \int_{Z_{2r}(\A) N_{2r}(\A) \sm \GL_{2r}(\A)}
    &W_{\varphi_{\pi_{2r}}}^\psi(g)
    W_{\phi, 2r}(g)\\
    \cdot&\int_{N_{2r-1}(k) \sm N_{2r-1}(\A)}
    F(\begin{pmatrix}
        n &\\ & 1
    \end{pmatrix}g, s_1,s_2) \psi_{N_{2r}}^{\operatorname{even}}(\begin{pmatrix}
        n & \\ & 1
    \end{pmatrix})\, dn.
\end{align*}
Thus, to prove the identities in Proposition \ref{prop: unfolding}, it suffices to show that 
\begin{align}
\label{eqn: unfolding f-W(2r-1)}
    &\int_{[N_{2r-1}]}
    F(\begin{pmatrix}
        n &\\ & 1
    \end{pmatrix}g, s_1,s_2) \psi_{N_{2r}}^{\operatorname{even}}(\begin{pmatrix}
        n & \\ & 1
    \end{pmatrix})\, dn 
    = f_{W_{2r-1}}(g, s_1,s_2),\\
\label{eqn: unfold E(r,r)}
    &\int_{[N_{2r}]}
    E(ng, s) \psi_{N_{2r}}^{\operatorname{odd}}(n)\, dn  = W_{(r,r);2r}(g,s),
\end{align}
where $f_{W_{2r-1}}(g, s_1,s_2)$ and $W_{(r,r);2r}(g,s)$ are as defined in \eqref{f-W(2r-1)} and \eqref{eqn: W(r,r)(s)} respectively.
We will only detail the proof of the identity \eqref{eqn: unfolding f-W(2r-1)}, as one can perform similar arguments to derive \eqref{eqn: unfold E(r,r)}. 
To show \eqref{eqn: unfolding f-W(2r-1)}, we first observe that $P_{r,r-1,1}\sm P_{2r-1,1} \cong P_{r,r-1}\sm \GL_{2r-1}$ such that 
\begin{align*}
    F(g, s_1,s_2) = 
    \sum_{\gamma \in P_{r,r-1}(k)\sm \GL_{2r-1}(k)}
    f(\begin{pmatrix}
        \gamma & \\ & 1
    \end{pmatrix}g, s_1, s_2).
\end{align*}
Moreover, by the Bruhat decomposition we have
\begin{align*}
    P_{r,r-1}(k)\sm \GL_{2r-1}(k)/ N_{2r-1}(k)\cong W_{P_{r,r-1}}\sm W_{\GL_{2r-1}} \cong \{A\subset\{1,\dots, 2r-1\}: |A| = r\},
\end{align*}
where $W_{P_{r,r-1}}$ and $W_{\GL_{2r-1}}$ are the Weyl group of $P_{r,r-1}$ and $\GL_{2r-1}$ respectively. Thus, 
\begin{align*}
    \GL_{2r-1}(k)= \bigsqcup_{\substack{
    A\subset\{1,\dots,2r-1\}\\
    |A| = r
    }} P_{r,r-1}(k) w_A N_{2r-1}(k),
\end{align*}
where $w_A$ is the permutation given as follows. If $A = \{a_1 < a_2 < \dots < a_r\} \subset \{1,\dots, 2r-1\}$ and $A^c = \{1,\dots, 2r-1\}\sm A = \{c_1 < c_2 < \dots < c_{r-1}\}$ then $w_A$ is the permutation satisfying
\begin{align*}
    w_A(a_i) = i, && w_A(c_s) = r+s, 
\end{align*}
for $1 \leq i \leq r, 1 \leq s \leq r-1$. Therefore,
\begin{align*}
    \int_{[N_{2r-1}]} F(\begin{pmatrix}
        n&\\ & 1
    \end{pmatrix}g,& s_1,s_2) \psi_{N_{2r}}^{\operatorname{even}}(\begin{pmatrix}
        n&\\ & 1
    \end{pmatrix})\, dn \\
    =&
    \sum_{\substack{
     A \subset \{1,\dots, 2r-1\}\\
    |A| = r
    }} \int_{N_{w_A}(k) \sm N_{2r-1}(\A)}
    f(w_A \begin{pmatrix}
        n & \\ & 1
    \end{pmatrix}g, s_1, s_2) 
    \psi_{N_{2r}}^{\operatorname{even}}(\begin{pmatrix}
        n&\\ & 1
    \end{pmatrix})\, dn,
\end{align*}
where $N_{w_A} = N_{2r-1} \cap w_A^{-1} P_{r,r-1} w_A$ is given by
\begin{align}
    \label{eqn: N-wA}
    N_{w_A} = \{ 
    n = (n_{i,j}) \in N_{2r-1} \mid 
    \text{$n_{i,j} =0$ whenever $i \in A^c, j \in A, i < j$}
    \}.
\end{align}
In particular, for $w_0$ and $U_{w_0}$ defined in the proposition, we have $w_0 = w_{A_0}$, $N_{2r-1} = N_{w_{A_0}} U_{w_0}$ and $N_{w_{A_0}} \cap U_{w_0} = \{1\}$ for $A_0 = \{1,3,5,\dots, 2r-1\}$. Furthermore, the character $\psi_{N_{2r}}^{\operatorname{even}}$ is trivial on $N_{w_{A_0}}(\A)$, realised as a subgroup of $N_{2r}(\A)$. Thus, to prove \eqref{eqn: unfolding f-W(2r-1)} it suffices to show that 
\begin{align}
\label{eqn: integral N-wA}
    \int_{N_{w_A}(k) \sm N_{w_A}(\A)} f(w_A \begin{pmatrix}
        n & \\ & 1
    \end{pmatrix}h, s_1, s_2) \psi_{N_{2r}}^{\operatorname{even}}(\begin{pmatrix}
        n & \\ & 1
    \end{pmatrix})\, dn
\end{align}
evaluates to zero for all such $w_A \neq w_0$ and $h \in \GL_{2r}(\A)$.
To do so, we will show for $w_A \neq w_0$, there exists a root subgroup
\begin{align*}
    U_{p,p+1} = \{I_{2r-1} + a E_{p,p+1}\} 
\end{align*}
for some $p \in \{2,4,\dots, 2r-2\}$ and some normal subgroup $N_{w_A}'\subset N_{w_A}$ such that $N_{w_A} = U_{p,p+1} N_{w_A}'$ and $U_{p,p+1} \cap N_{w_A}' = \{1\}$. Indeed, for $w_A \neq w_0$, one can choose 
\begin{align*}
    p_A = 2\min\{t \in \{1,2,\dots, r-1\} : \text{$w_A(2t) \leq r$ or $w_A(2t+1) >r$}\} \in \{2,4,\dots, 2r-2\},
\end{align*}
and defining $l_{p_A} : N_{w_A} \to \G_a$ by
\begin{align*}
    l_{p_A}(n) = n_{p_A, p_A+1},
\end{align*}
we have
\begin{align*}
    N_{w_A}' = \ker(l_{p_A}) = \{n = (n_{i,j})_{1\leq i, j \leq 2r-1} \in N_{w_A} \mid n_{p_A,p_A + 1} =0\},
\end{align*}
such that $N_{w_A} = U_{p_A,p_A+1}N_{w_A}'$ and $U_{p_A, p_A + 1} \cap N_{w_A}' = \{1\}$. Consequently, we obtain \eqref{eqn: unfolding f-W(2r-1)} and hence, this concludes the proof of Proposition \ref{prop: unfolding}.

By the uniqueness of the Whittaker model, we have the following corollary.
\begin{cor}
    \label{cor: Euler factorisation}
    Assuming $\operatorname{Re}(s_1-s),\operatorname{Re}(2rs_1-s_2),\operatorname{Re}(s),\operatorname{Re}(s_1), \operatorname{Re}(s_2) \gg0$ and for factorisable integration data, the global integrals $I_r(\varphi_{\pi_{2r}}, \theta_{\tau_2}, s_1,s_2)$ and $I_r(\varphi_{\pi_{2r}}, s, s_1,s_2)$ factor as
    \begin{align*}
        I_r(\varphi_{\pi_{2r}}, \theta_{\tau_2}, s_1,s_2) =& \prod_\nu \calZ_r(W_\nu, s_1,s_2),\\
        I_r(\varphi_{\pi_{2r}}, s, s_1, s_2) =& \prod_\nu \calZ_r(W_\nu,s,s_1,s_2),
    \end{align*}
    where
    \begin{align*}
        \calZ_r(W_{\nu}, s_1,s_2) =&\int_{Z_{2r}(k_\nu) N_{2r}(k_\nu) \sm \GL_{2r}(k_\nu)}
        W_{\varphi_{\pi_{2r,\nu}}}^{\psi_\nu}(g)
        W_{\theta_{\tau_{2,\nu}}, 2r}(g) 
        f_{W_{2r-1,\nu}}(g, s_1,s_2) \, dg,\\
        \calZ_r(W_{\nu},s,s_1,s_2) =& \int_{Z_{2r}(k_\nu) N_{2r}(k_\nu) \sm \GL_{2r}(k_\nu)}
        W_{\pi_{{2r},\nu}}^{\psi}(g) 
        W_{(r,r);2r,\nu}(g,s)
        f_{W_{{2r-1},\nu}}(g,s_1,s_2)\,dg,
    \end{align*}
    and
    \begin{align*}
        f_{W_{2r-1},\nu}(g,s_1,s_2)=& \int_{U_{w_0}(k_\nu)}
        f_\nu(w_0 ng, s_1,s_2) \psi_{N_{2r},\nu}^{\operatorname{even}}(n) \, dn,\\
        W_{(r,r);2r,\nu}(g,s) =& \int_{U_{w_0'}(k_\nu)} f_\nu(w_0' u g, s) \psi_{U_{w_0'},\nu}(u)\, du.
    \end{align*}
\end{cor}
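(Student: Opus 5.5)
The factorisation will be deduced directly from the unfolded expressions of Proposition \ref{prop: unfolding}, by showing that each of the three functions in the integrand is a product of local functions. The integrand consists of $W_{\varphi_{\pi_{2r}}}^\psi$, the coefficient $W_{\theta_{\tau_2},2r}$ or $W_{(r,r);2r}(\cdot,s)$, and $f_{W_{2r-1}}(\cdot,s_1,s_2)$.

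For the Whittaker function of $\pi_{2r}$ we take $\varphi_{\pi_{2r}}$ factorisable under $\pi_{2r}\cong\otimes'_\nu\pi_{2r,\nu}$. Uniqueness of local Whittaker models (Shalika, Gelfand--Kazhdan) then gives $W_{\varphi_{\pi_{2r}}}^\psi(g)=\prod_\nu W_{\varphi_{\pi_{2r,\nu}}}^{\psi_\nu}(g_\nu)$, with the standard normalisation $W^\circ_\nu(1)=1$ at almost all unramified places.

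For the Eisenstein data we take $f=\otimes_\nu f_\nu$ and $f(\cdot,s)=\otimes_\nu f_\nu(\cdot,s)$. The integrals defining $f_{W_{2r-1}}$ and $W_{(r,r);2r}$ in \eqref{f-W(2r-1)} and \eqref{eqn: W(r,r)(s)} are over the adelic groups $U_{w_0}(\A)$ and $U_{w_0'}(\A)$ against factorisable characters. In the stated range of $\operatorname{Re}(s_1-s)$, $\operatorname{Re}(2rs_1-s_2)$, $\operatorname{Re}(s)$, $\operatorname{Re}(s_1)$, $\operatorname{Re}(s_2)$ these integrals converge absolutely, by comparison with the usual Jacquet integrals for the intertwining operators attached to $w_0$ and $w_0'$. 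Fubini then splits them into the products $\prod_\nu f_{W_{2r-1},\nu}$ and $\prod_\nu W_{(r,r);2r,\nu}$ exactly as written in the statement. At almost all places the unramified local factors are finite, being ratios of local zeta values, so the products converge. This settles the case $\phi=E(\cdot,s)$.

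For $\phi=\theta_{\tau_2}$, the functional $\theta\mapsto W_{\theta,2r}(1)$ is a $(k,\psi^{\mathrm{odd}})$-degenerate Whittaker functional for the Shalika-type orbit $(2^r)$. We would invoke the local uniqueness of such models for the generalised Speh representation $\Delta(\tau_{2,\nu},r)$ (Cai--Friedberg--Ginzburg--Kaplan \cite{CFGK}): the space of $(N_{2r},\psi^{\mathrm{odd}})$-equivariant functionals on $\Delta(\tau_{2,\nu},r)$ is at most one-dimensional. Hence for factorisable $\theta_{\tau_2}$ we get $W_{\theta_{\tau_2},2r}(g)=c\prod_\nu W_{\theta_{\tau_{2,\nu}},2r}(g_\nu)$, with the constant absorbed into the normalisation.

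Substituting these three factorisations into the unfolded integrals and using $Z_{2r}(\A)N_{2r}(\A)\sm\GL_{2r}(\A)=\prod'_\nu Z_{2r}(k_\nu)N_{2r}(k_\nu)\sm\GL_{2r}(k_\nu)$ with product measure gives the Euler products. This step is justified by absolute convergence in the stated range, checked by bounding the unramified local integrals as in the later unramified computation.

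I expect the main obstacle to be this uniqueness (and nonvanishing) of the degenerate Whittaker model for $\Delta(\tau_2,r)$. That is not a formal consequence of the usual Whittaker uniqueness and must be imported from \cite{CFGK}; everything else is Fubini and convergence bookkeeping.
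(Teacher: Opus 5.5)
Your proposal is correct and follows the paper's route: after the unfolding of Proposition~\ref{prop: unfolding}, the paper appeals to uniqueness of Whittaker-type models. Your version usefully makes explicit two points the paper leaves unsaid. First, $f_{W_{2r-1}}$ and $W_{(r,r);2r}$ factor by Fubini alone, through absolute convergence of the adelic intertwining-type integrals, so uniqueness is genuinely needed only for $W_{\varphi_{\pi_{2r}}}^\psi$ and for the $(N_{2r},\psi^{\mathrm{odd}})$-coefficient of $\Delta(\tau_2,r)$. Second, the uniqueness in \cite{CFGK} is for the model along the unipotent radical of $P_{r,r}$ with character $\psi(\tr X)$, so passing to the $(N_{2r},\psi^{\mathrm{odd}})$-model needs a standard root-exchange (or Moeglin--Waldspurger type) comparison, which the paper also leaves implicit.
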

\section{Unramified computation}
\label{sec: Unramified computation}
Throughout this section, let $F$ be a non-archimedean local field of characteristic zero, with ring of integers $\OO_F$ and $\varpi \in \OO_F$ a uniformizer such that $|\varpi| = q^{-1}$ for $|\cdot| = |\cdot|_F$ being the usual absolute value on $F$, and $q$ is the cardinality of the residue field. We will also assume the measures are normalised such that
\begin{align*}
    \operatorname{vol}(\OO_F, dx) = 1, &&
    \operatorname{vol}(\OO_F^\tm , d^\tm x) = 1, &&
    \operatorname{vol}(\GL_n(\OO_F), dk)= 1.
\end{align*}
Next, we let
\begin{align}
\label{eqn: pi-2r, tau-2 irreducible unramified PS}
   \pi_{2r} = \Ind_{B_{2r}(F)}^{\GL_{2r}(F)} (\xi_1 \otimes \xi_2 \otimes \cdots \otimes \xi_{2r}), &&  \tau_2= \Ind_{B_2(F)}^{\GL_2(F)} (\chi \otimes \chi^{-1}),
\end{align}
be irreducible unramified principal series of $\PGL_{2r}(F)$ and $\PGL_2(F)$ where $\xi_i$ and $\chi$ are unramified characters of $F^\tm$ satisfying $\prod_{i=1}^{2r} \xi_i = \mathbf 1$. We set their corresponding Satake parameters to be 
\begin{align}
    \label{eqn: Satake parameters}
    t_{\pi_{2r}} = \diag(\alpha_1,\dots, \alpha_{2r}) \in \SL_{2r}(\C), && 
    t_{\tau_2} = \diag(b,b^{-1}) \in \SL_2(\C).
\end{align}
We will fix $\psi : F \to \C^\tm$ an unramified additive character of $F$, i.e. it is trivial on $\OO_F$ but non-trivial on $\varpi^{-1} \OO_F$. We will define the unramified analogue of $\psi_{N_{2r}}$ (resp. $\psi_{N_{2r}}^{\operatorname{odd}}, \psi_{N_{2r}}^{\operatorname{even}}$ and $\psi_{U_{w_0'}}$) accordingly. Let $W_{\pi_{2r}}^\circ \in \mathcal W(\pi_{2r}, \psi_{N_{2r}})^{\GL_{2r}(\OO_F)}$ and $W_{\Delta(\tau_2,r)}^\circ \in \mathcal W(\Delta(\tau_{2}, r), (\psi_{N_{2r}}^{\operatorname{odd}})^{-1})^{\GL_{2r}(\OO_F)}$, $f_{s_1,s_2}^\circ = f^\circ(\cdot,s_1,s_2) \in I(s_1,s_2)^{\GL_{2r}(\OO_F)}$ and $f_s^\circ = f^\circ(\cdot,s) \in I(s)^{\GL_{2r}(\OO_F)}$ be the normalised spherical functions normalised by
\begin{align*}
   W_{\pi_{2r}}^\circ(I_{2r}) = W_{\Delta(\tau_2,r)}^\circ(I_{2r}) = f^\circ_{s_1,s_2}(I_{2r}) = f_s^\circ(I_{2r}) = 1.
\end{align*}
With these as well as the notations introduced in  Proposition \ref{prop: unfolding}, we define
\begin{align}
\label{eqn: f-W(2r-1)-unramified}
    f_{W_{2r-1}^\circ}(g,s_1,s_2) =& \int_{U_{w_0}(F)} f_{s_1,s_2}^\circ(w_0 n g) \psi_{N_{2r}}^{\operatorname{even}}(n)\, dn,\\
\label{eqn: W-(r,r);2r-unramified}
    W^\circ_{(r,r);2r}(g,s) =& \int_{U_{w_0'}(F)} f_s^\circ(w_0' u g, s) \psi_{U_{w_0'}}(u)\, du.
\end{align}
Furthermore, we will define the following local $L$-functions:
\begin{align*}
    \zeta(s) =& (1-q^{-s})^{-1},&&
    L(\pi_{2r}, s) = \prod_{i=1}^{2r}
    (1-\alpha_i q^{-s})^{-1},\\
    L(\pi_{2r}, \wedge^2, s) =& \prod_{1 \leq i < j \leq 2r} (1-\alpha_i\alpha_j q^{-s})^{-1},&&
    L(\pi_{2r} \tm \tau_2,s) = \prod_{i=1}^{2r}
    (1-\alpha_ibq^{-s})^{-1}
    (1-\alpha_ib^{-1}q^{-s})^{-1}.
\end{align*}
We will evaluate unramified local integrals $\calZ_r^\circ(W^\circ,s_1,s_2)$ and $\calZ_r^\circ(W^\circ,s,s_1,s_2)$ defined by
\begin{align*}
    \calZ_r^\circ(W^\circ,s_1,s_2) =& \int_{Z_{2r}(F) N_{2r}(F) \sm \GL_{2r}(F)}
    W_{\pi_{2r}}^\circ(g) W_{\Delta(\tau_2,r)}^\circ(g)
    f_{W_{2r-1}^\circ}(g,s_1,s_2)\,dg,\\
    \calZ_r^\circ(W^\circ,s,s_1,s_2) =& \int_{Z_{2r}(F) N_{2r}(F) \sm \GL_{2r}(F)}
        W_{\pi_{{2r}}}^{\circ}(g) 
        W_{(r,r);2r}^\circ(g,s)
        f_{W_{2r-1}^\circ}(g,s_1,s_2)\,dg.
\end{align*}
We have the following theorem.
\begin{thm}
    \label{thm: unramified computation}
    For $\operatorname{Re}(s_1-s),\operatorname{Re}(2rs_1-s_2),\operatorname{Re}(s),\operatorname{Re}(s_1), \operatorname{Re}(s_2) \gg0$,
    the unramified local integrals $\calZ_r^\circ(W^\circ,s_1,s_2)$ and $\calZ_r^\circ(W^\circ,s,s_1,s_2)$ defined above evaluate to 
    \begin{align*}
         \calZ_r^\circ(W^\circ, s_1,s_2)  =& \frac{
        L\LL(\pi_{2r} \tm \tau_2, rs_1 - \frac{r-1}{2} \RR) L\LL(\pi_{2r},\wedge^2,s_2\RR) 
        }{
        \displaystyle\zeta(rs_2) 
        \zeta(2rs_1+(r-1)s_2-r+1)
        \prod_{i=1}^{r-1} \zeta(2rs_1 - s_2 -i + 1)
        },\\
        \calZ_r^\circ(W^\circ,s,s_1,s_2) =& \frac{
        L\LL(\pi_{2r}, r(s_1+s-1) + \frac{1}{2} \RR)
        L\LL(\pi_{2r}, r(s_1-s) + \frac{1}{2} \RR)
        L\LL(\pi_{2r},\wedge^2,s_2\RR) 
        }{
        \displaystyle \zeta(rs_2) 
        \zeta(2rs_1+(r-1)s_2-r+1)
        \prod_{i=1}^{r-1} \zeta(2rs_1 - s_2 -i + 1)
        \prod_{j=1}^r \zeta(2rs - r + j)
        }.
    \end{align*}
\end{thm}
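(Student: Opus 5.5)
We reduce both unramified integrals to sums over dominant coweights via the Iwasawa decomposition and then evaluate the sums with a Cauchy--Littlewood type identity.

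\textbf{Iwasawa reduction.} Write $g = n t k$ with $t = \diag(t_1,\dots,t_{2r})$ modulo $Z_{2r}(F)$. The integrand is right $\GL_{2r}(\OO_F)$-invariant and left $(N_{2r},\psi)$-equivariant with total character trivial, since $\psi_{N_{2r}}^{\operatorname{odd}}\psi_{N_{2r}}^{\operatorname{even}} = \psi_{N_{2r}}$ and $W_{\Delta(\tau_2,r)}^\circ$ transforms by $(\psi_{N_{2r}}^{\operatorname{odd}})^{-1}$. Hence each integral becomes
\[
\sum_{\lambda} W_{\pi_{2r}}^\circ(\varpi^\lambda)\, W_{\ast}^\circ(\varpi^\lambda)\, f_{W_{2r-1}^\circ}(\varpi^\lambda)\, \delta_{B_{2r}}^{-1}(\varpi^\lambda),
\]
summed over $\lambda\in\Z^{2r}$ modulo the diagonal shift. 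By the Shintani--Casselman--Shalika formula, $W_{\pi_{2r}}^\circ(\varpi^\lambda)$ vanishes unless $\lambda$ is dominant, and then equals $\delta_{B_{2r}}^{1/2}(\varpi^\lambda)\, s_\lambda(\alpha)$, where $s_\lambda$ is the Schur polynomial.

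\textbf{Evaluating the three factors.}
\begin{enumerate}
\item[(i)] The local integral $f_{W_{2r-1}^\circ}$ is a degenerate Whittaker function of the $\GL_{2r-1}$ factor twisted by the character on $t_{2r}$. We compute it by factoring $w_0 = w_{A_0}$ into simple reflections and applying the Gindikin--Karpelevich/rank-one computation on each root subgroup of $U_{w_0}$. The roots $E_{2i,2j-1}$ with $\psi$ supported on $E_{2i,2i+1}$ contribute Casselman--Shalika type factors. The expected output is an explicit formula on $\varpi^\lambda$: it is supported on $\lambda$ satisfying the ``interlacing'' conditions $\lambda_{2i}\ge\lambda_{2i+1}$-type inequalities, it is given by powers of $q^{-s_1}, q^{-s_2}$ and modulus characters, and it carries the normalising factor $\prod_{i=1}^{r-1}\zeta(2rs_1-s_2-i+1)^{-1}$.
\item[(ii)] Similarly, $W^\circ_{(r,r);2r}(\varpi^\lambda,s)$ is evaluated by the same rank-one reductions. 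Here $\psi(\operatorname{tr}X)$ picks out the $E_{2i-1,2i}$ entries. It should be supported on $\lambda_{2i-1}\ge\lambda_{2i}$-compatible coweights and equal a Schur polynomial $s_\mu(q^{-s+\cdot}\,,\,q^{s-\cdot})$ in two variables attached to the pairs, times $\prod_{j=1}^r\zeta(2rs-r+j)^{-1}$.
\item[(iii)] For $\Delta(\tau_2,r)$ we use the known unramified formula from \cite{CFGK} (or Ginzburg): $W^\circ_{\Delta(\tau_2,r)}(\varpi^\lambda)$ is nonzero only for $\lambda$ of the form $(\mu_1,\mu_1',\dots)$ with an explicit character of $\GL_2$-type in $b$. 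Concretely, it is a product of $\GL_2$ characters $s_{(\lambda_{2i-1},\lambda_{2i})}(b,b^{-1})$, or more precisely a Schur function of $\GL_r$-shape in $(b,b^{-1})$ after restriction.
\end{enumerate}
This matches (ii) with $(q^{-s'},q^{s'})$ in place of $(b,b^{-1})$, up to the zeta normalisation. Consequently the second formula is a specialisation of the first, with $b\mapsto q^{\pm(rs-r/2)}$ and $L(\pi\times\tau_2)$ splitting into two standard $L$-functions; we prove only the first in detail.

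\textbf{Summation.} After substituting, the sum becomes
\[
\sum_\lambda s_\lambda(\alpha)\, s_{\nu(\lambda)}(b,b^{-1})\, X^{|\cdot|}\cdots .
\]
We recognise this as an instance of the Cauchy identity combined with the Littlewood identity
\[
\sum_{\mu \text{ even columns}} s_\mu(\alpha)\,x^{|\mu|/2} = \prod_{i<j}(1-\alpha_i\alpha_j x)^{-1},
\]
which produces $L(\pi_{2r},\wedge^2,s_2)$. Cauchy with two variables $(b,b^{-1})$ produces $L(\pi_{2r}\times\tau_2,rs_1-\frac{r-1}2)$. The remaining factors $\zeta(rs_2)$ and $\zeta(2rs_1+(r-1)s_2-r+1)$ arise from the central-character constraint $\prod\alpha_i=1$ and from the geometric sums over the free coordinate $t_{2r}$. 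The cleanest route is to use a Pieri/branching decomposition
\[
s_\lambda(\alpha)=\sum s_{\mu}\cdots,
\]
or equivalently to first sum over $\lambda$ with fixed ``shape'' $(\lambda_{\mathrm{even}},\lambda_{\mathrm{odd}})$ via the identity
\[
\sum_\lambda s_\lambda(\alpha)s_\lambda(y)=\prod(1-\alpha_iy_j)^{-1}
\]
with $y=(bq^{-u},b^{-1}q^{-u},q^{-v},\dots)$, and then to apply Littlewood.

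\textbf{Main obstacle.} We expect the main obstacle to be step (i): the explicit closed form of $f_{W_{2r-1}^\circ}(\varpi^\lambda)$, and in particular its support. The approach we would try first is induction on $r$: peel off the last pair of rows by a rank-one $\GL_2$ (Jacquet integral) computation, reducing $U_{w_0}$ for $r$ to that for $r-1$ together with an embedded $\GL_{2r-1}\to\GL_{2r-3}$ step. This induction is checked against Ginzburg's $r=2$ case. Once the formula is known, matching the combinatorial sum with the Cauchy--Littlewood identity is bookkeeping, checked by comparing generating functions in $q^{-s_1},q^{-s_2}$.
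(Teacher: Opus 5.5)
Your outline is the paper's: Iwasawa reduction with Shintani--Casselman--Shalika, closed torus formulas for the three local functions via Gindikin--Karpelevich plus rank-one $\GL_2$ Jacquet integrals, a Littlewood--Cauchy identity, and the second integral by the specialisation $b\mapsto q^{rs-r/2}$. Your account of where each $L$- and $\zeta$-factor comes from is also right.

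The gap is that the step you call bookkeeping is where the proof actually lives, and the ``equivalent'' route you offer for it does not work. A full Cauchy identity $\sum_\lambda s_\lambda(\alpha)s_\lambda(z)$, with $z$ containing variables beyond the two $\GL_2$-parameters, introduces standard factors $\prod_i(1-\alpha_iz_k)^{-1}$ for those extra $z_k$, and these are absent from the answer. What is needed is the \emph{skew} Cauchy identity in two variables. Set $x=q^{-rs_1+(r-1)/2}$, $y=q^{-s_2}$, $u=xb$, $v=xb^{-1}$. Use $\det\alpha=1$ to write Littlewood as $(1-y^r)\prod_{i<j}(1-y\alpha_i\alpha_j)^{-1}=\sum_\mu y^{|\mu|}s_{\eta(\mu)}(\alpha)$ with $\eta(\mu)=(\mu_1,\mu_1,\dots,\mu_{r-1},\mu_{r-1},0,0)$. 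Then expand $s_{\eta(\mu)}(\alpha)\prod_i(1-u\alpha_i)^{-1}(1-v\alpha_i)^{-1}=\sum_\lambda s_{\lambda/\eta(\mu)}(u,v)s_\lambda(\alpha)$.
\begin{itemize}
\item A two-variable skew Schur function vanishes when its diagram has a column of height $>2$, so only the interlacing range $\lambda_{2i}\ge\mu_i\ge\lambda_{2i+1}$ survives.
\item On that range, $s_{\lambda/\eta(\mu)}(u,v)=(uv)^{\sum_i(\lambda_{2i}-\mu_i)}\prod_j s_{(n_{2j-1},0)}(u,v)$.
\item Summing $y^{\mu_i}(uv)^{\lambda_{2i}-\mu_i}$ over the range gives $y^{\lambda_{2i}}(1-(x^2y^{-1})^{n_{2i}+1})/(1-x^2y^{-1})$.
\item Stripping off $\lambda_{2r}$ costs the factor $1-x^2y^{r-1}$.
\end{itemize}
Your ``Pieri/branching'' remark points in this direction, but this identification is the crux and has to be made.

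Correspondingly, your predicted shape of $f_{W_{2r-1}^\circ}(t)$ must be sharpened. Beyond ``powers of $q^{-s_1},q^{-s_2}$ and modulus characters'' and the normalisation $\prod_{i=0}^{r-2}\zeta(2rs_1-s_2-i)^{-1}$, it must contain exactly the factors $\prod_{i=1}^{r-1}(1-(x^2y^{-1})^{n_{2i}+1})/(1-x^2y^{-1})$, since these are what the sum over $\mu$ reproduces.

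No induction on $r$ is needed to get them. Because $U_{w_0}$ is abelian, $\widetilde n(l)=n_1(\widetilde D)\widetilde n_0(\widetilde Z)$ splits off the $\psi$-carrying entries $E_{2i,2i+1}$. The identity $w_0\widetilde n(l)=\widetilde\sigma u_1(\widetilde Z)\widetilde{w}_L n_1(\widetilde D)$ then turns the rest into an intertwining operator $A_{\widetilde\sigma}$, whose value at $I_{2r}$ comes from Gindikin--Karpelevich. Its restriction to the Levi $\GL_1\times\GL_2^{r-1}\times\GL_1$ of $P_{1,2^{r-1},1}$ is a character times a product of $\GL_2$ spherical vectors. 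The remaining $r-1$ rank-one Jacquet integrals give the geometric sums.

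The same splitting $w_0'u(X)=\sigma u'(Z)w_L n(D)$ handles $W^\circ_{(r,r);2r}$. It also handles $W^\circ_{\Delta(\tau_2,r)}$ with no external formula, since locally $\Delta(\tau_2,r)\cong\Ind_{P_{r,r}}^{\GL_{2r}}(\chi\circ\det\otimes\chi^{-1}\circ\det)$. The result is $\delta_{P_{2^r}}^{1/4}(t)\prod_iW^\circ_{\tau_2}(\diag(a_{2i-1},1))$. So your first description in (iii), a product of $\GL_2$ characters in $(b,b^{-1})$ up to modulus factors, is right; the alternative ``Schur function of $\GL_r$-shape'' is not.
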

Similar to the previous section, we will introduce some preliminary identities for the terms $W_{\Delta(\tau_2, r)}^\circ$, $W_{(r,r);2r}^\circ$ and $f_{W_{2r-1}^\circ}$ before we proceed with the proof of Theorem \ref{thm: unramified computation}.
\subsubsection{Rank-one computation}
Let $\mu_i$ be unramified characters of $F^\tm$ be such that the normalised induction $\Ind_{B_2(F)}^{\GL_2(F)}(\mu_1 \otimes \mu_2)$ is an irreducible unramified principal series of $\GL_2(F)$. Let $\Phi^\circ \in (\Ind_{B_2(F)}^{\GL_2(F)}(\mu_1 \otimes \mu_2))^{\GL_2(\OO_F)}$ be its normalised spherical section such that $\Phi^\circ(k) = \Phi^\circ(I_2) = 1$ for all $k \in \GL_2(\OO_F)$. We define the $\GL_2(F)$-Jacquet integral as
\begin{align}
    \label{eqn: Jacquet integral GL(2)}
    J_{\mu_1,\mu_2}(g) = \int_F \Phi^\circ(\begin{pmatrix}
        &1\\
        1&
    \end{pmatrix} \begin{pmatrix}
        1& x\\ & 1
    \end{pmatrix} g) \psi(x) \, dx, && g \in \GL_2(F).
\end{align}
\begin{lem}
    \label{lem: Rank-one Jacquet integral}
    (Rank-one Jacquet integral) In the usual convergence cone, we have
    \begin{align*}
        J_{\mu_1,\mu_2}(I_2) = 1- q^{-1}\mu_1\mu_2^{-1}(\varpi) .
    \end{align*}
    Moreover, for $m\in \Z$ we have
    \begin{align*}
        \frac{J_{\mu_1,\mu_2}(\diag(\varpi^m,1))}{J_{\mu_1,\mu_2}(I_2)}
         = \begin{dcases*}
            q^{-m/2} \sum_{l=0}^{m}\mu_1(\varpi)^{l} \mu_2(\varpi)^{m-l}, & if $m\geq0$,\\
            0 &otherwise.
        \end{dcases*}
    \end{align*}
\end{lem}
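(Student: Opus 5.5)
The plan is to evaluate $J_{\mu_1,\mu_2}$ directly from the Iwasawa decomposition, splitting the $x$-integral into $\OO_F$ and the shells $|x| = q^n$ with $n \geq 1$.

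\textbf{Step 1: the integrand.} Write $w = \begin{pmatrix} & 1\\ 1 & \end{pmatrix}$ and $n(x) = \begin{pmatrix} 1 & x\\ & 1\end{pmatrix}$. Recall that $\Phi^\circ(bk) = \mu_1(a_1)\mu_2(a_2)|a_1/a_2|^{1/2}$ for $b$ upper triangular with diagonal $(a_1,a_2)$ and $k \in \GL_2(\OO_F)$.
\begin{itemize}
\item For $x \in \OO_F$ we have $wn(x) \in \GL_2(\OO_F)$, so $\Phi^\circ(wn(x)) = 1$.
\item For $|x|>1$, right multiplication by the lower unipotent matrix with entry $-x^{-1} \in \OO_F$ turns $wn(x)$ into an upper triangular matrix with diagonal $(x^{-1}, x)$. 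Hence $\Phi^\circ(wn(x)) = \mu_1\mu_2^{-1}(x^{-1})\,|x|^{-1}$.
\end{itemize}

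\textbf{Step 2: the value at $I_2$.}
\begin{itemize}
\item Since $\psi$ is unramified, the contribution of $\OO_F$ is $\operatorname{vol}(\OO_F)=1$.
\item On the shell $|x| = q^n$, the integrand is the constant $\mu_1\mu_2^{-1}(\varpi)^n q^{-n}$ times $\psi(x)$.
\item We have $\int_{|x|=q^n}\psi(x)\,dx = \int_{\varpi^{-n}\OO_F}\psi - \int_{\varpi^{-n+1}\OO_F}\psi$. This equals $-1$ for $n=1$ and $0$ for $n \geq 2$, because $\psi$ is trivial on $\OO_F$ and nontrivial on $\varpi^{-1}\OO_F$.
\end{itemize}
Summing gives $J_{\mu_1,\mu_2}(I_2) = 1 - q^{-1}\mu_1\mu_2^{-1}(\varpi)$.

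\textbf{Step 3: reduction at $\diag(\varpi^m,1)$.} Write $t_m = \diag(\varpi^m,1)$.
\begin{itemize}
\item Use $n(x)t_m = t_m n(\varpi^{-m}x)$ and $wt_m = \diag(1,\varpi^m)w$.
\item Substitute $x = \varpi^m y$, which contributes the Jacobian $q^{-m}$.
\item Pull $\diag(1,\varpi^m)$ out of $\Phi^\circ$, which contributes $\mu_2(\varpi)^m q^{m/2}$.
\end{itemize}
This gives
\begin{align*}
J_{\mu_1,\mu_2}(t_m) = q^{-m/2}\mu_2(\varpi)^m\int_F \Phi^\circ(wn(y))\,\psi(\varpi^m y)\,dy .
\end{align*}
We then rerun the shell decomposition of Step 1 with the character $y\mapsto \psi(\varpi^m y)$, which is trivial on $\varpi^{-m}\OO_F$ and nontrivial on $\varpi^{-m-1}\OO_F$.

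\textbf{Step 4: the two cases.}
\begin{itemize}
\item If $m<0$, this character is nontrivial on $\OO_F$. Both the $\OO_F$-piece and every shell then integrate to zero, so $J_{\mu_1,\mu_2}(t_m)=0$.
\item If $m \geq 0$, the pieces are: $\OO_F$ contributes $1$; shells with $1\leq n\leq m$ contribute $(\mu_1\mu_2^{-1}(\varpi))^n q^{-n}\cdot q^n(1-q^{-1})$; the shell $n=m+1$ contributes $-(\mu_1\mu_2^{-1}(\varpi))^{m+1}q^{-1}$; and shells with $n \geq m+2$ contribute $0$.
\end{itemize}
Write $z=\mu_1\mu_2^{-1}(\varpi)$. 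The total for $m \geq 0$ is
\begin{align*}
\sum_{n=0}^m z^n - q^{-1}\sum_{n=1}^{m+1} z^n = (1-q^{-1}z)\sum_{n=0}^m z^n,
\end{align*}
since the $q^{-1}$-terms telescope. Multiplying by $q^{-m/2}\mu_2(\varpi)^m$ and dividing by $J_{\mu_1,\mu_2}(I_2)$ gives $q^{-m/2}\sum_{l=0}^m\mu_1(\varpi)^l\mu_2(\varpi)^{m-l}$.

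\textbf{Main obstacle.} The only delicate point is bookkeeping: the correct Iwasawa factor in Step 1, the sign and normalisation conventions, and confirming the telescoping in Step 4. Convergence holds in the cone $|\mu_1\mu_2^{-1}(\varpi)|<q^{0}$-ish, where the shell sums are absolutely convergent. Outside that cone the formula follows by analytic continuation, which is the "usual convergence cone" caveat in the statement.
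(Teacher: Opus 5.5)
Your computation is correct at every step: the Iwasawa factor $\mu_1\mu_2^{-1}(x^{-1})|x|^{-1}$ for $|x|>1$, the shell integrals of $\psi$, the rescaling at $\diag(\varpi^m,1)$, and the factorisation $(1-q^{-1}z)\sum_{n=0}^{m}z^n$. The vague ``$<q^{0}$-ish'' can be made exact: the region of absolute convergence is $|\mu_1\mu_2^{-1}(\varpi)|<1$, and this also guarantees $J_{\mu_1,\mu_2}(I_2)\neq 0$, so the division is legitimate. The paper gives no argument of its own and only cites Bump's book (Theorem 4.6.5); your direct evaluation of the Jacquet integral is the standard computation behind that result, so it makes the lemma self-contained rather than taking a genuinely different route.
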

\begin{proof}
    See  \cite[Theorem 4.6.5]{Bu}.
\end{proof}
\subsection{Reduction to rank-one computations}
For $a_1,a_2,\dots, a_{2r-1} \in F^\tm$, write
\begin{align}
    \label{eqn: t} t = \diag(a_1a_2\cdots a_{2r-1},
        a_2 \cdots a_{2r-1},\dots, a_{2r-1}, 1) \in \GL_{2r}(F).
\end{align}
Furthermore assuming $\operatorname{Re}(s) \gg0$, we set $\mu: F^\tm \to \C^\tm$ to be the unramified character of $F^\tm$ defined by $\mu(a) = |a|^{rs-r/2}$, and let $\Xi_2$ be the corresponding irreducible unramified principal series of $\GL_2(F)$ given by
\begin{align}
\label{eqn: Xi2}
    \Xi_2 = \Ind_{B_2(F)}^{\GL_2(F)}(\mu \otimes \mu^{-1}),
\end{align}
with $\mathcal W(\Xi_2, \psi_{N_2}^{-1})$ being its corresponding Whittaker model. Similarly, we write $\mathcal W(\tau_2, \psi_{N_2}^{-1})$ to denote the Whittaker model of $\tau_2$, with $W_{\tau_2}^\circ \in \mathcal W(\tau_2, \psi_{N_2}^{-1})^{\GL_2(\OO_F)}$ (resp. $W_{\Xi_2}^\circ \in \mathcal W(\Xi_2, \psi_{N_2}^{-1})^{\GL_2(\OO_F)}$) being its normalised unramified Whittaker function such that $W_{\tau_2}^\circ(I_2) = W_{\Xi_2}^\circ(I_2) = 1$.
For such $t \in \GL_{2r}(F)$ given in \eqref{eqn: t}, we have the following product formula for $W_{\Delta(\tau_2, r)}^\circ(t)$ (resp. $W_{(r,r);2r}^\circ(t,s)$) in terms of $W_{\tau_2}^\circ$ (resp. $W_{\Xi_2}^\circ$) given as follows.
\begin{prop}
    \label{prop: theta-tau-(r,r) closed formula}
    For $\operatorname{Re}(s_1-s),\operatorname{Re}(2rs_1-s_2),\operatorname{Re}(s),\operatorname{Re}(s_1), \operatorname{Re}(s_2) \gg0$ and $t \in \GL_{2r}(F)$ given in \eqref{eqn: t}, we have
    \begin{align*}
        W_{\Delta(\tau_2,r)}^\circ(t) =&\begin{dcases*}
            \D_{P_{2^r}}^{1/4}(t) \cdot \prod_{i=1}^r W_{\tau_2}^\circ(\diag(a_{2i-1}, 1)) &if $|a_1|, |a_3|, \dots, |a_{2r-1}|\leq1$,\\
            0, &otherwise,
        \end{dcases*}\\
         W_{(r,r);2r}^\circ(t,s) =& \prod_{i=1}^r \frac{1}{\zeta(2rs-r+i)}\begin{dcases*}
             \D_{P_{2^r}}^{1/4}(t) \cdot \prod_{i=1}^r W_{\Xi_2}^\circ(\diag(a_{2i-1}, 1))&if $|a_1|, |a_3|, \dots, |a_{2r-1}|\leq1$,\\
            0, &otherwise,
         \end{dcases*}
    \end{align*}
    Furthermore, $f_{W_{2r-1}^\circ}(t,s_1,s_2) =0$ unless $|a_2|, |a_4|, \dots, |a_{2r-2}|\leq1$. If this condition holds, we have
    \begin{align*}
        f_{W_{2r-1}^\circ}(t,s_1,s_2) =&\LL(
        \prod_{i=1}^r |a_{2i-1}|^{rs_1+(i-1)s_2+(i-1)(r-i)}
        \RR) \LL( \prod_{i=1}^{r-1} |a_{2i}|^{is_2+i(r-i)} \RR)\\
        \cdot& \frac{
        \zeta(2rs_1-s_2-(r-1))^{r-1}
        }{
        \prod_{i=0}^{r-2}
        \zeta(2rs_1-s_2-i)
        }
        \prod_{i=1}^{r-1}
        (1-q^{r-1-(2rs_1-s_2)} |a_{2i}|^{(2rs_1-s_2)-(r-1)}).
    \end{align*}
\end{prop}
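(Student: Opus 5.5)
The plan is to treat the three formulas separately, each time reducing the integral over the relevant unipotent group to a product of rank-one Jacquet integrals as in Lemma \ref{lem: Rank-one Jacquet integral}.

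\textbf{The Speh Whittaker function.} For $W_{\Delta(\tau_2,r)}^\circ(t)$, I would use the known description of the $(2^r)$-type Whittaker model of $\Delta(\tau_2,r)$. The vector $W_{\Delta(\tau_2,r)}^\circ$ is the unique (up to scalar) spherical functional with respect to $\psi_{N_{2r}}^{\operatorname{odd}}$. Because $\Delta(\tau_2,r)$ is a quotient of $\Ind_{P_{2^r}}^{\GL_{2r}}(\tau_2|\cdot|^{(r-1)/2}\otimes\cdots\otimes\tau_2|\cdot|^{-(r-1)/2})$, this functional is realised by a Jacquet-type integral over the unipotent group attached to the permutation taking the blocks $P_{2^r}$ to the interlaced positions. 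The analogue of $w_0'$ with $\GL_2$ blocks does this.

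Evaluated at $t$, the integral factors. Conjugating $t$ through the permutation gives, in each $\GL_2$ block $i$, the element $\diag(a_{2i-1},1)$ times a central part. The odd-indexed superdiagonal entries $a_{2i-1}$ enter the $\tau_2$ Whittaker functions, while the even entries $a_{2i}$ only contribute a modulus character. The central parts cancel against the twists $|\cdot|^{(r-1)/2-i+1}$, leaving $\D_{P_{2^r}}^{1/4}(t)$. The support condition $|a_{2i-1}|\le 1$ then follows from the support of $W_{\tau_2}^\circ$.

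Alternatively, and more robustly, I would invoke the unramified computation of Cai--Friedberg--Ginzburg--Kaplan for $(k,c)$ models with $k=2$, and only verify the normalisation. The same argument applied to $\Xi_2$ (that is, to $E(\cdot,s)$, where $\Ind_{P_{r,r}}\D^s$ is the $(r,2)$-degenerate analogue of $\mu\otimes\mu^{-1}$) handles $W^\circ_{(r,r);2r}$. In that case I would compute the integral \eqref{eqn: W-(r,r);2r-unramified} directly. Factor $w_0'$ and $U_{w_0'}$ as a product of $r(r+1)/2$ simple reflections: the diagonal ones $x_{i,i}$ are genuinely $\GL_2$ Jacquet integrals, and the off-diagonal ones $x_{i,j}$, $i<j$, carry trivial character. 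The off-diagonal integrals are Gindikin--Karpelevich factors, which produce the normalising product $\prod_i\zeta(2rs-r+i)^{-1}$ after cancellation. The diagonal ones produce $J_{\mu,\mu^{-1}}(\diag(a_{2i-1},1))$, which Lemma \ref{lem: Rank-one Jacquet integral} converts into $W_{\Xi_2}^\circ(\diag(a_{2i-1},1))$ up to the $J(I_2)$ factors.

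\textbf{The function $f_{W_{2r-1}^\circ}$.} I would proceed the same way. First, conjugate $t$ across $w_0$, changing variables in $U_{w_0}$ to pull $t$ out. This gives the factor $f^\circ(w_0tw_0^{-1})$, which is the displayed monomial in $|a_j|$ using \eqref{eqn: f(s1,s2) quasi-invariance}, times a Jacobian. What remains is an integral over $U_{w_0}$ with character $\psi(\sum a_{2i}\,n_{2i,2i+1})$.

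Next, decompose $w_0$ into adjacent transpositions, ordered so that the entries $n_{2i,2i+1}$ are integrated last. The entries $u_{2i,2j-1}$ with $j>i+1$ have trivial character and give Gindikin--Karpelevich ratios $\zeta(x)/\zeta(x+1)$. The argument $x$ depends only on the distance $j-i$ and telescopes to $\zeta(2rs_1-s_2-(r-1))^{r-1}/\prod_{i=0}^{r-2}\zeta(2rs_1-s_2-i)$; I will have to check the exponent of this telescoping. The $r-1$ remaining entries each give a rank-one Jacquet integral in $\GL_2$, with characters whose ratio is $|\cdot|^{2rs_1-s_2-(r-1)}$ shifted by the $\rho$-shift. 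Each of these is evaluated at $\diag(a_{2i},1)$.

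For that $\GL_2$ evaluation I would not use Lemma \ref{lem: Rank-one Jacquet integral} itself, since the inducing character here is not unitary-symmetric. Instead I would directly compute $\int_F \Phi(w x)\psi(a x)\,dx$: split into $|x|\le1$ and $|x|>1$. This gives $0$ unless $|a|\le1$, and otherwise $1-q^{-1}X|a|^{?}$ with $X=q^{-(2rs_1-s_2)+r}$, matching the factor $1-q^{r-1-(2rs_1-s_2)}|a_{2i}|^{2rs_1-s_2-(r-1)}$ after absorbing the $|a_{2i}|$ power into the monomial.

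\textbf{Main obstacle.} I expect the hardest part to be the bookkeeping of exponents. This means ensuring that the $\rho$-shifts, the Jacobians from conjugating $t$, and the ordering of the rank-one steps produce exactly $(i-1)(r-i)$ and $i(r-i)$ and the stated zeta ratios. I would first verify everything for $r=2$ against Ginzburg's Proposition 2, then do the induction on $r$ by peeling off the last block.
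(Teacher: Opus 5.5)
Your overall strategy matches the paper's. You split each Weyl element into a part with trivial character, which is an intertwining operator evaluated by Gindikin--Karpelevich, and the $\GL_2$ long elements carrying the character, which give rank-one Jacquet integrals. Pulling $t$ out of $f_{W_{2r-1}^\circ}$ first is a harmless variant: it reproduces the stated monomial exactly, with no leftover power of $|a_{2i}|$ to absorb.

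The gap is in how you account for the zeta factors of $f_{W_{2r-1}^\circ}$. You make two false claims that happen to compensate, so as written the plan would not survive your own check. Put $\kappa=2rs_1-s_2$. The entry $u_{2i,2j-1}$ with $j>i+1$ contributes $\zeta(\kappa-r+j-i)/\zeta(\kappa-r+j-i+1)$, so the Gindikin--Karpelevich constant is
\[
\prod_{i=1}^{r-2}\Bigl(\frac{\zeta(\kappa-i)}{\zeta(\kappa-i+1)}\Bigr)^{i}=\frac{\zeta(\kappa-r+2)^{r-2}}{\prod_{i=0}^{r-3}\zeta(\kappa-i)},
\]
which never involves $\zeta(\kappa-r+1)$. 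For $r=2$ there are no such entries at all, yet the stated prefactor is $\zeta(\kappa-1)/\zeta(\kappa)$.

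The missing constants come from the rank-one integrals. In block $i$ the relevant spherical section $\phi_i^\circ$ on $\GL_2(F)$ has unnormalised character $|t_1|^{rs_1-i+1}|t_2|^{-rs_1+s_2+r-1-i}$. For $|a|\le 1$, your splitting into $|x|\le 1$ and $|x|>1$ actually gives
\[
\int_F\phi_i^\circ\Bigl(\begin{pmatrix}&1\\1&\end{pmatrix}\begin{pmatrix}1&x\\&1\end{pmatrix}\Bigr)\psi(ax)\,dx=\frac{\zeta(\kappa-r+1)}{\zeta(\kappa-r+2)}\bigl(1-q^{r-1-\kappa}|a|^{\kappa-r+1}\bigr),
\]
not just $1-q^{r-1-\kappa}|a|^{\kappa-r+1}$. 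The $r-1$ numerators give $\zeta(\kappa-(r-1))^{r-1}$. The $r-1$ denominators complete the Gindikin--Karpelevich constant to $\prod_{i=0}^{r-2}\zeta(\kappa-i)^{-1}$.

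You also do not need to avoid Lemma~\ref{lem: Rank-one Jacquet integral}: it holds for arbitrary unramified $\mu_1\otimes\mu_2$, and the paper applies it directly to $\phi_i^\circ$ with $\mu_1\mu_2^{-1}(\varpi)=q^{r-1-\kappa}$. The same correction applies to $W^\circ_{(r,r);2r}(I_{2r},s)$. There, $\prod_{i=1}^r\zeta(2rs-r+i)^{-1}$ is the Gindikin--Karpelevich constant $\prod_{d=1}^{r-1}\bigl(\zeta(2rs-r+d)/\zeta(2rs-r+d+1)\bigr)^{r-d}$ times $J_{\mu,\mu^{-1}}(I_2)^r=\zeta(2rs-r+1)^{-r}$. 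The off-diagonal integrals alone do not produce it.

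For $W^\circ_{\Delta(\tau_2,r)}$ you mix two realisations. The interlacing element $w_0'$ belongs to the paper's picture $\Delta(\tau_2,r)\cong\Ind_{P_{r,r}}(\chi\circ\det\otimes\chi^{-1}\circ\det)$ (CFGK, Claim 9). There the functional is $J_r(f)=\int_{(UT)_r}f(w_0'u(X))\psi(\tr X)\,dX$, and the factorisation $w_0'=\sigma w_L$ reduces it to a Gindikin--Karpelevich constant times $r$ rank-one integrals. This makes $W^\circ_{\Delta(\tau_2,r)}$ and $W^\circ_{(r,r);2r}$ literally the same computation, with $\chi$ replaced by $\mu=|\cdot|^{rs-r/2}$.

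In the $P_{2^r}$ picture nothing is interlaced and $t$ already lies in the Levi. The clean version of your idea uses the embedding $\Delta(\tau_2,r)\hookrightarrow\Ind_{P_{2^r}}(\tau_2|\det|^{-(r-1)/2}\otimes\cdots\otimes\tau_2|\det|^{(r-1)/2})$. Since $\psi^{\operatorname{odd}}_{N_{2r}}$ is trivial on $U_{2^r}$ and is the Whittaker character on $N_{2r}\cap L_{2^r}$, applying the Whittaker functional of $\tau_2^{\otimes r}$ to $f(I_{2r})$ is a nonzero functional of the required type. By uniqueness, which the paper also uses, for $m=\diag(g_1,\dots,g_r)$ you get
\[
W^\circ_{\Delta(\tau_2,r)}(m)=\delta_{P_{2^r}}^{1/2}(m)\prod_i|\det g_i|^{-\rho_i}W^\circ_{\tau_2}(g_i)=\delta_{P_{2^r}}^{1/4}(m)\prod_iW^\circ_{\tau_2}(g_i),
\]
with $\rho_i=(r+1-2i)/2$ and no Gindikin--Karpelevich constant. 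That is a genuine shortcut for this one formula. It does not give the constant $W^\circ_{(r,r);2r}(I_{2r},s)$, which is defined by an explicit integral.

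Finally, the CFGK $(k,c)$ computation cannot be quoted directly. The $(2,r)$ model is a functional on $U_{P_{r,r}}$ with character $\psi(\tr(\cdot))$, not the $(N_{2r},\psi^{\operatorname{odd}}_{N_{2r}})$-functional used here.
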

\begin{proof}
    We will first give a proof of the identity for  $W_{\Delta(\tau_2,r)}^\circ$, followed by $W_{(r,r);2r}^\circ$ and lastly $f_{W_{2r-1}^\circ}$. 
    Throughout, we will assume that $\operatorname{Re}(s_1-s),\operatorname{Re}(2rs_1-s_2),\operatorname{Re}(s),\operatorname{Re}(s_1), \operatorname{Re}(s_2) \gg0$.
    Recalling $\tau_2$ in \eqref{eqn: pi-2r, tau-2 irreducible unramified PS}, and from \cite[Claim 9]{CFGK} we have
    \begin{align*}
        \Delta(\tau_2, r) \cong \Ind_{P_{r,r}(F)}^{\GL_{2r}(F)}(\chi \circ \det \otimes \chi^{-1}\circ \det),
    \end{align*}
    where elements $f \in  \Delta(\tau_2, r)$ can be regarded as smooth complex-valued functions defined on $\GL_{2r}(F)$ satisfying the quasi-invariance
    \begin{align*}
        f(\begin{pmatrix}
            g_1 & X\\ & g_2 
        \end{pmatrix}h) = \chi(\det g_1) \chi^{-1}(\det g_2) \LL| \frac{\det g_1}{\det g_2} \RR|^{r/2} f(h),
    \end{align*}
    for $g_1,g_2 \in \GL_r(F)$, $X \in M_{r,r}(F)$ and $h \in \GL_{2r}(F)$. Let $f_{\Delta(\tau_2,r)}^\circ \in \Delta(\tau_2, r)^{\GL_{2r}(\OO_F)}$ be its normalised spherical element such that $f_{\Delta(\tau_2,r)}^\circ(I_{2r}) = f_{\Delta(\tau_2,r)}^\circ(k) = 1$ for all $k \in \GL_{2r}(\OO_F)$.
    We define the functional $J_r \in \operatorname{Hom}_{N_{2r}(F)}(\Delta(\tau_2, r), \overline{\psi_{N_{2r}}^{\operatorname{odd}}})$ by
    \begin{align*}
        J_r(f) = \int_{N_{w_0'}(F) \sm N_{2r}(F)}
        f(w_0' n) \psi_{N_{2r}}^{\operatorname{odd}}(n) \, dn,
    \end{align*}
    where $w_0' \in \GL_{2r}(F)$ is the permutation matrix given in Proposition \ref{prop: unfolding},
    and $N_{w_0'} := N_{2r} \cap w_0'^{-1} P_{r,r} w_0'$ is given by
    \begin{align*}
        N_{w_0'} = \LL\{ n = (n_{i,j}) \in N_{2r} \mid n_{2a-1,2b} = 0, \forall 1 \leq a \leq b \leq r \RR\}.
    \end{align*}
    Since $N_{2r} = N_{w_0'} U_{w_0'}$ and $N_{w_0'} \cap U_{w_0'} = \{1\}$, then using the notations in Proposition \ref{prop: unfolding}, we can re-write $J_r(f)$ defined above as
\begin{align}
    \label{eqn: Jacquet integral II}
    J_r(f) = \int_{(UT)_r(F)} f(w_0' u(X)) \psi(\tr X)\,dX,
\end{align}
Thus, we have an integral representation of $W_{\Delta(\tau_2,r)}^\circ$ given by
\begin{align}
\label{eqn: integral repn W-theta-2r}
    W_{\Delta(\tau_2, r)}^\circ(g) = \frac{
    J_r(\Delta(\tau_2, r)(g) f_{\Delta(\tau_2,r)}^\circ)
    }{
    J_r(f_{\Delta(\tau_2,r)}^\circ)
    }, && g \in \GL_{2r}(F).
\end{align}
We will use this integral expression for $W_{\Delta(\tau_2, r)}^\circ$ to derive the identity in Proposition \ref{prop: theta-tau-(r,r) closed formula}. Define $w_L \in \GL_{2r}$ by
\begin{align*}
    w_L = \diag(\begin{pmatrix}
        &1\\ 
        1&
    \end{pmatrix}, \begin{pmatrix}
        &1\\ 
        1&
    \end{pmatrix},\dots, \begin{pmatrix}
        &1\\ 
        1&
    \end{pmatrix}),
\end{align*}
and set $\sigma = w_0 \in \GL_{2r}$ to be the permutation matrix defined in Proposition \ref{prop: unfolding}. It is clear that  $w_0' = \sigma w_L$. Let $X =(x_{i,j}) \in (UT)_r(F)$ be such that $X = D + Z$ where $D = \diag(x_{1,1},\dots, x_{r,r})$ and $Z = (x_{i,j})_{i<j}$ is strictly upper triangular. We define
    \begin{align*}
        n(D) = \prod_{i=1}^r (I_{2r} + x_{i,i} E_{2i-1,2i}), && v_0(Z) = \prod_{1 \leq i < j \leq r} (I_{2r} + x_{i,j} E_{2i-1,2j}), && u'(Z) = \prod_{1 \leq i < j \leq r}(I_{2r} + x_{i,j}E_{2i,2j-1}),
    \end{align*}
    so that $u(X) \in U_{w_0'}$ takes the form $u(X) = n(D) v_0(Z)$ and $w_0' u(X) = \sigma u'(Z) w_L n(D)$.
    Consequently, the Jacquet integral $J_r(f)$ in \eqref{eqn: Jacquet integral II} can be written as
    \begin{align}
        \label{eqn: Jacquet integral III}
            J_r(f) = \int_{F^r} (A_\sigma f)(w_L n(\diag(x_{1,1}, x_{2,2},\dots, x_{r,r}))) \psi(x_{1,1}+x_{2,2}+\cdots+x_{r,r})\,dx_{i,i},
    \end{align}
    for
    \begin{align*}
        (A_\sigma f)(g) = \int_{U_\sigma(F)} f(\sigma ug) \, du,
    \end{align*}
    where $U_\sigma$ is the unipotent subgroup of $N_{2r}$ consisting of elements of the form $u'(Z)$.
    Let $P_{2^r} = L_{2^r} U_{2^r}$ be the standard parabolic subgroup of $\GL_{2r}(F)$ associated to the composition $(2^r)$ of $2r$, where its Levi subgroup $L_{2^r}$ consists of matrices of the form $\diag(g_1,\dots, g_r)$ for $g_i \in \GL_2(F)$. Then, for $u'(Z) \in U_\sigma$ and
    \begin{align*}
        d(u,v) = \diag(u_1,v_1,u_2,v_2,\dots, u_r,v_r) \in L_{2^r},
    \end{align*}
    we have
    \begin{align*}
        d(u,v)^{-1} u'(Z) d(u,v) = u'(Z'), && Z' = (z_{i,j}')_{i<j},&& z_{i,j}' = z_{i,j} \frac{u_j}{v_i}.
    \end{align*}
    Then, using the quasi-invariance of $f_{\Delta(\tau_2, r)}^\circ$ and by changing suitable variables, we have
    \begin{align*}
        (A_\sigma f_{\Delta(\tau_2,r)}^\circ)(d(u,v) g) = \prod_{i=1}^r \chi(u_i) \chi^{-1}(v_i) |u_i|^{\rho_i + 1/2} |v_i|^{\rho_i-1/2}(A_\sigma f_{\Delta(\tau_2,r)}^\circ)(g), 
    \end{align*}
    where $\rho_i = (r+1-2i)/2$ for $1 \leq i \leq r$. Moreover, it can be verified that
    \begin{align*}
        (A_\sigma f_{\Delta(\tau_2,r)}^\circ)(\diag(\begin{pmatrix}
            1 & x_1\\ & 1
        \end{pmatrix},\begin{pmatrix}
            1 & x_2\\ & 1
        \end{pmatrix},\dots, \begin{pmatrix}
            1 & x_r\\ & 1
        \end{pmatrix}) g) = (A_\sigma f_{\Delta(\tau_2,r)}^\circ)(g).
    \end{align*}
    Thus by the Iwasawa decomposition for $\GL_2(F)$, we have
    \begin{align}
    \label{eqn: A-sigma auxiliary I}
        (A_\sigma f_{\Delta(\tau_2,r)}^\circ)(\diag(g_1,\dots, g_r)) = (A_\sigma f_{\Delta(\tau_2,r)}^\circ)(I_{2r})\cdot \prod_{i=1}^r |\det g_i|^{\rho_i} \phi_{\tau_2}^\circ(g_i),
    \end{align}
    where $\phi_{\tau_2}^\circ \in \tau_2^{\GL_2(\OO_F)}$ is the normalised spherical function such that $\phi_{\tau_2}^\circ(I_2)= 1$. Furthermore, since
    \begin{align*}
        \D_{P_{2^r}}^{1/4}(\diag(g_1,\dots, g_r)) =\prod_{i=1}^r |\det g_i|^{\rho_i}, 
    \end{align*}
    then the above expression for $A_\sigma f_{\Delta(\tau_2,r)}^\circ$ given in \eqref{eqn: A-sigma auxiliary I} becomes
    \begin{align}
        \label{eqn: A-sigma auxiliary II}
        (A_\sigma f_{\Delta(\tau_2,r)}^\circ)(\diag(g_1,\dots, g_r)) = (A_\sigma f_{\Delta(\tau_2,r)}^\circ)(I_{2r})\cdot \D_{P_{2^r}}^{1/4}(\diag(g_1,\dots, g_r)) \prod_{i=1}^r \phi_{\tau_2}^\circ(g_i).
    \end{align}
    Substituting the above identity \eqref{eqn: A-sigma auxiliary II} for $A_\sigma f_{\Delta(\tau_2, r)}^\circ$ into \eqref{eqn: Jacquet integral III}, we have
    \begin{align*}
        J_r(\Delta(\tau_2,r)(m) f_{\Delta(\tau_2,r)}^\circ) = (A_\sigma f_{\Delta(\tau_2,r)}^\circ)(I_{2r})\cdot \D_{P_{2^r}}^{1/4}(m)
        \prod_{i=1}^r \LL(\int_F \phi_{\tau_2}^\circ(\begin{pmatrix}
            &1\\
            1&
        \end{pmatrix} \begin{pmatrix}
            1 & x_{i,i}\\
            &1
        \end{pmatrix} g_i) \psi(x_{i,i})\,dx_{i,i} \RR),
    \end{align*}
    for $m= \diag(g_1,\dots,g_r) \in \GL_{2r}(F)$ where $g_i \in \GL_2(F)$. Furthermore, from \eqref{eqn: integral repn W-theta-2r} and Lemma \ref{lem: Rank-one Jacquet integral}, together with the non-vanishing of $(A_\sigma f_{\Delta(\tau_2,r)}^\circ)(I_{2r})$ due to the Gindikin-Karpelevich formula \cite[Theorem 3.1]{Ca}, we obtain
    \begin{align}
    \label{eqn: W-theta-2r auxiliary II}
        W_{\Delta(\tau_2,r)}^\circ(m) = \D_{P_{2^r}}^{1/4}(m) \prod_{i=1}^r W_{\tau_2}^\circ(g_i).
    \end{align}
    Choosing $m = t$ where $t \in \GL_{2r}(F)$ as in \eqref{eqn: t}, we obtain the expression for $W_{\Delta(\tau_2, r)}^\circ(t)$ in the proposition. On the other hand, for $m = \diag(g_1,\dots,g_r) \in \GL_{2r}(F)$ where $g_i \in \GL_2(F)$, one can repeat the same argument as above to obtain the following identity for $W_{(r,r);2r}^\circ(m,s)$:
    \begin{align*}
        W_{(r,r);2r}^\circ(m,s) = W_{(r,r);2r}^\circ(I_{2r},s)\cdot \D_{P_{2^r}}^{1/4}(m)
                             \prod_{i=1}^r W_{\Xi_2}^\circ(g_i).
    \end{align*}
    Here, $W_{(r,r);2r}^\circ(I_{2r},s)$ can be written as
    \begin{align*}
        W_{(r,r);2r}^\circ(I_{2r},s) = (A_\sigma f_s^\circ)(I_{2r})\cdot \LL( \int_F\Phi^\circ(\begin{pmatrix}
            & 1\\ 1& 
        \end{pmatrix}\begin{pmatrix}
            1 & x\\ & 1
        \end{pmatrix})  \psi(x) \, dx \RR)^r,
    \end{align*}
    where $\Phi^\circ \in \Xi_2^{\GL_2(\OO_F)}$ is the normalised spherical function of $\Xi_2$ given in \eqref{eqn: Xi2} such that $\Phi^\circ(k)= \Phi^\circ(I_2) = 1$ for all $k \in \GL_2(\OO_F)$. 
    Using Lemma \ref{lem: Rank-one Jacquet integral} together with the Gindikin-Karpelevich formula \cite[Theorem 3.1]{Ca} for $(A_\sigma f_s^\circ)(I_{2r})$, we obtain the following closed formula for $W_{(r,r);2r}^\circ(I_{2r},s)$:
    \begin{align*}
        W_{(r,r);2r}^\circ(I_{2r},s) = \prod_{i=1}^r \frac{1}{\zeta(2rs-r+i)}.
    \end{align*}
    Again, choosing $m = t$ where $t \in \GL_{2r}(F)$ as in \eqref{eqn: t}, we obtain the expression for $W_{(r,r);2r}^\circ(t,s)$ in the proposition. Finally, for $f_{W_{2r-1}^\circ}(t,s_1,s_2)$ we follow a similar argument as above. First, we observe that $f_{W_{2r-1}^\circ}(g,s_1,s_2)$ given in \eqref{eqn: f-W(2r-1)-unramified} can be reduced to 
    \begin{align}
        \label{eqn: f-W(2r-1) auxiliary I}
        f_{W_{2r-1}^\circ}(g,s_1,s_2) = \int_{(UT)_{r-1}(F)} f_{s_1,s_2}^\circ(w_0 \widetilde n(l) g) \psi_{N_{2r}}(\widetilde n(l)) \, dl,
    \end{align}
    where for $l = (l_{i,j})_{1\leq i, j \leq r-1} \in (UT)_{r-1}(F)$ we define $\widetilde n(l) \in \GL_{2r}(F)$ to be
    \begin{align*}
        \widetilde n(l) = I_{2r} + \sum_{1 \leq i \leq j \leq r-1} l_{i,j} E_{2i,2j+1}.
    \end{align*}
    For such $l = (l_{i,j})_{1\leq i, j \leq r-1} \in (UT)_{r-1}(F)$, we split $l = \widetilde D + \widetilde Z$ as
    \begin{align*}
        \widetilde D = \diag(l_{1,1}, l_{2,2},\dots, l_{r-1,r-1}), && \widetilde Z = (l_{i,j})_{1\leq i < j \leq r-1},
    \end{align*}
    with
    \begin{align*}
         n_1(\widetilde D) = \prod_{i=1}^{r-1} (I_{2r} + l_{i,i} E_{2i,2i+1}), && 
         \widetilde n_0(\widetilde Z) = I_{2r} + \sum_{1 \leq i < j \leq r-1} l_{i,j} E_{2i,2j+1},
    \end{align*}
    such that $\widetilde n(l) = n_1(\widetilde D) \tilde n_0(\widetilde Z)$. Also, writing
    \begin{align*}
        \widetilde w_L = \diag(1, \begin{pmatrix}
            &1\\
            1&
        \end{pmatrix},\begin{pmatrix}
            &1\\
            1&
        \end{pmatrix},\dots, \begin{pmatrix}
            &1\\
            1&
        \end{pmatrix}, 1) \in \GL_{2r}(F),
    \end{align*}
    and setting $\widetilde \sigma$ such that $w_0 = \widetilde \sigma \widetilde w_L$, we have $ w_0 \widetilde n(l) = \widetilde \sigma u_1(\widetilde Z) \widetilde w_L n_1(\widetilde D)$ for
    \begin{align*}
         u_1(\widetilde Z) = I_{2r} + \sum_{1 \leq i < j \leq r-1} l_{i,j} E_{2i+1,2j}.
    \end{align*}
    Let  $U_{\widetilde \sigma}$ be the unipotent subgroup consisting of matrices of the form $u_1(\widetilde Z)$, the integral $f_{W_{2r-1}^\circ}$ defined in \eqref{eqn: f-W(2r-1) auxiliary I} can be written as 
    \begin{align}
        \label{eqn: f-W(2r-1) auxiliary II}
        f_{W_{2r-1}^\circ}(g, s_1, s_2) =\int_{F^{r-1}} (A_{\widetilde \sigma} f_{s_1,s_2}^\circ)(\widetilde w_L
        n_1(\widetilde D) g) \psi(l_{1,1} + \cdots + l_{r-1,r-1}) \, dl_{i,i},
    \end{align}
    where $A_{\widetilde \sigma} f_{s_1,s_2}^\circ$ is given by
    \begin{align*}
        (A_{\widetilde \sigma} f_{s_1,s_2}^\circ)(g) = \int_{U_{\widetilde \sigma}(F)} f_{s_1,s_2}^\circ(\widetilde \sigma u g) \, du, && g \in \GL_{2r}(F).
    \end{align*}
    Let $Q = P_{1,2^{r-1},1}$ be the standard parabolic subgroup of $\GL_{2r}(F)$ corresponding to the composition $(1,2^{r-1}, 1)$ whose Levi is $L_Q \cong \GL_1 \tm \GL_2^{r-1} \tm \GL_1$. Let $t_0, u_i, v_i, t_{r} \in \GL_1$ for $1 \leq i \leq r-1$, we define $d \in L_Q \subset \GL_{2r}(F)$ by
    \begin{align*}
        d = \diag(t_0, u_1, v_1, u_2, v_2,\dots, u_{r-1}, v_{r-1}, t_{r}).
    \end{align*}
    We observe that
    \begin{align*}
        \widetilde\sigma d \widetilde\sigma^{-1} = \diag(\diag(t_0, u_1,\dots, u_{r-1}), \diag(v_1,\dots, v_{r-1}), t_r),
    \end{align*}
    and
    \begin{align*}
        d^{-1} u_1(\widetilde Z) d = u_1(\tilde Z'), && \widetilde Z' = (\widetilde z_{i,j}')_{1\leq i < j \leq r-1}, &&
        \widetilde z_{i,j}' = \widetilde z_{i,j} \frac{u_j}{v_i}.
    \end{align*}
    Therefore, using the quasi-invariance of $f_{s_1,s_2}^\circ$ in \eqref{eqn: f(s1,s2) quasi-invariance} and performing suitable changes of variables, we have
    \begin{align}
        \label{eqn: A-tilde-sigma}
        (A_{\widetilde \sigma} f^\circ_{s_1,s_2})(dg)  = |t_0|^{rs_1} |t_{r}|^{-rs_1-(r-1)s_2} \prod_{i=1}^{r-1}
        |u_i|^{rs_1-i+1} |v_i|^{-rs_1 + s_2+(r-1)-i} (A_{\widetilde \sigma} f^\circ_{s_1,s_2})(g).
    \end{align}
    Moreover for $g \in \GL_{2r}(F)$, a direct calculation shows that
    \begin{align*}
        (A_{\widetilde \sigma} f_{s_1,s_2}^\circ)(\diag(1, \begin{pmatrix}
            1 & x_1\\ & 1
        \end{pmatrix}, \begin{pmatrix}
            1 & x_2\\ & 1
        \end{pmatrix}, \dots, \begin{pmatrix}
            1 & x_{r-1}\\ & 1   
        \end{pmatrix},1) g) = (A_{\widetilde \sigma} f_{s_1,s_2}^\circ)(g).
    \end{align*}
    Hence, from the Iwasawa decomposition for $\GL_2(F)$, we can re-write $A_{\widetilde \sigma} f_{s_1,s_2}^\circ$ as follows
    \begin{align}
        \label{eqn: A-tilde-sigma III}
        (A_{\widetilde \sigma} f_{s_1,s_2}^\circ)(m) =  (A_{\tilde \sigma} f_{s_1,s_2}^\circ)(I_{2r})\cdot
        |t_0|^{rs_1} |t_{r}|^{-rs_1-(r-1)s_2} \prod_{i=1}^{r-1} \phi_i^\circ(g_i),
    \end{align}
    for $m = \diag(t_0, g_1, g_2,\dots, g_{r-1}, t_r) \in \GL_{2r}(F)$ and $t_0, t_{r} \in F^\tm, g_i \in \GL_2(F)$, where $\phi_i^\circ$ are the smooth complex-valued function on $\GL_2(F)$ satisfying the following quasi-invariance:
    \begin{align*}
        \phi_i^\circ(\begin{pmatrix}
            t_1 & x\\ & t_2
        \end{pmatrix} g k) = |t_1|^{rs_1-i+1} |t_2|^{-rs_1 + s_2 +(r-1) -i} \phi_i^\circ(g),
    \end{align*}
    where $t_i \in F^\tm, x \in F, g \in \GL_2(F)$ and $k \in \GL_2(\OO_F)$ such that $\phi_i^\circ(I_2) = 1$.  Moreover,  $(A_{\widetilde \sigma} f_{s_1,s_2}^\circ)(I_{2r})$ has the closed formula
    \begin{align*}
        (A_{\widetilde \sigma} f_{s_1,s_2}^\circ)(I_{2r}) = \prod_{i=1}^{r-2} \LL( \frac{
        \zeta(2rs_1-s_2-i)
        }{\zeta(2rs_1-s_2-i+1)} \RR)^i,
    \end{align*}
    by the Gindikin-Karpelevich formula \cite[Theorem 3.1]{Ca}. Thus, for $t \in \GL_{2r}(F)$ given in \eqref{eqn: t} and substituting \eqref{eqn: A-tilde-sigma III} into \eqref{eqn: f-W(2r-1) auxiliary II}, together with Lemma \ref{lem: Rank-one Jacquet integral}, we obtain the identity for $f_{W_{2r-1}^\circ}(t,s_1,s_2)$ in the proposition. This concludes the proof.
\end{proof}
\subsection{Cauchy-Littlewood type identity}
Next, we will prove a Cauchy-Littlewood type identity.
Let $(k_1,k_2,\dots, k_n) \in \Z_{\geq0}^n$ be an $n$-tuple such that $k_1\geq k_2 \geq \cdots \geq k_n \geq0$. Given $t \in \SL_{n}(\C)$, we let $\chi^{\SL_n(\C)}_{(k_1,k_2,\dots, k_n)}(t)$ be the character of the irreducible representation of $\SL_{n}(\C)$ corresponding to the highest weight $(k_1,k_2,\dots, k_n)$, evaluated at $t$. Throughout, we will set
\begin{align*}
    x = q^{-rs_1 + (r-1)/2}, && y = q^{-s_2},
\end{align*}
and $\alpha = \diag(\alpha_1,\dots, \alpha_{2r}) \in \SL_{2r}(\C)$ and $b \in \C^\tm$. For $n = (n_1,\dots, n_{2r-1}) \in \Z_{\geq0}^{2r-1}$, we also set
\begin{align}
    \label{eqn: lambda-n}
    \lambda(n) = (n_1 + \cdots + n_{2r-1}, n_2 +\cdots + n_{2r-1}, \dots, n_{2r-1}, 0).
\end{align}
We have the following lemma.
\begin{lem}
    \label{lem: Cauchy-Littlewood} 
For $\operatorname{Re}(2rs_1-s_2), \operatorname{Re}(s_1), \operatorname{Re}(s_2)\gg0$, we have
\begin{align*}
    \sum_{n_1,\dots, n_{2r-1}\geq0} \chi_{\lambda(n)}^{\SL_{2r}(\C)} (\alpha)
    \prod_{j=1}^{r}\chi_{(n_{2j-1},0)}^{\SL_2(\C)}
    (\diag(b,b^{-1})) 
    x^{\sum_{j=1}^r n_{2j-1}}
    y^{\sum_{j=1}^{r-1} j(n_{2j} + n_{2j+1})} \prod_{j=1}^{r-1}
    \frac{
    (1-(x^2y^{-1})^{n_{2j}+1}
    }{
    1-x^2y^{-1}
    }
\end{align*}
evaluating to 
\begin{align*}
    \frac{
    (1-y^r)(1-x^2 y^{r-1})
    }{
    \displaystyle \prod_{i=1}^{2r}(1-xb\alpha_i)(1-xb^{-1}\alpha_i)
    \prod_{1 \leq i < j \leq 2r}(1-y \alpha_i \alpha_j)
    }.
\end{align*}
\end{lem}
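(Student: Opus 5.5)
The plan is to prove the identity by expanding both sides in Schur polynomials of $\alpha$ and comparing coefficients. I will first work with $\alpha=\diag(\alpha_1,\dots,\alpha_{2r})$ in $\GL_{2r}(\C)$, keeping $\det\alpha$ as a free variable, and specialise to $\det\alpha=1$ only at the end.

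\emph{Step 1: expand the right-hand side.} On the right-hand side I use two classical expansions.
\begin{itemize}
\item The Cauchy identity, restricted to two variables:
\[
\prod_{i}(1-xb\alpha_i)^{-1}(1-xb^{-1}\alpha_i)^{-1}=\sum_{\mu,\ \ell(\mu)\le 2}s_\mu(\alpha)\,s_\mu(xb,xb^{-1}) .
\]
Here $s_\mu(xb,xb^{-1})=x^{|\mu|}\chi^{\SL_2(\C)}_{(\mu_1-\mu_2,0)}(\diag(b,b^{-1}))$.
\item Littlewood's identity:
\[
\prod_{i<j}(1-y\alpha_i\alpha_j)^{-1}=\sum_{\nu'\ \mathrm{even}}y^{|\nu|/2}s_\nu(\alpha).
\]
\end{itemize}
In the coordinates \eqref{eqn: lambda-n}, the condition that $\nu'$ is even means that all odd-indexed $n_{2j-1}$ vanish. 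The weight $y^{|\nu|/2}$ then becomes exactly $y^{\sum_j j\,n_{2j}}$, which is consistent with the $y$-exponent in the statement.

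\emph{Step 2: combine via Littlewood--Richardson.} The coefficient of $s_\lambda(\alpha)$ on the right-hand side is
\[
\sum_{\mu,\nu}c^\lambda_{\mu\nu}\,x^{|\mu|}y^{|\nu|/2}\chi_{(\mu_1-\mu_2)}(b).
\]
Since $\mu$ has at most two rows and $\nu$ has even columns, I expect $c^\lambda_{\mu\nu}\in\{0,1\}$. I would show that adding a two-row shape to an even-column shape amounts to adding, in each pair of consecutive rows $(2j-1,2j)$, a two-box-high vertical strip pattern. This is a Pieri-type description, proved by iterating the Pieri rule twice, once for each row of $\mu$ via Jacobi--Trudi $s_\mu=h_{\mu_1}h_{\mu_2}-h_{\mu_1+1}h_{\mu_2-1}$.

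\emph{Step 3: match with the left-hand side.} The left-hand side, as written, is a sum over $\lambda(n)$ of $s_\lambda(\alpha)$ times the product $\prod_{j=1}^{r}\chi_{(n_{2j-1})}(b)$. Using the Clebsch--Gordan rule, I expand this product of $r$ characters into single characters $\chi_{(m)}(b)$, and then compare with the coefficient from Step~2. The geometric factors $\sum_{k=0}^{n_{2j}}(x^2y^{-1})^k$ should account for the freedom in how many boxes of $\mu$ sit in the even rows $2j$: moving one box pair from $\nu$ to $\mu$ trades $y$ for $x^2$. I expect this bookkeeping to be the main obstacle. Concretely, I must show that, for fixed $\lambda$, the weighted count of LR-compatible triples $(\mu,\nu)$ together with $\chi_{(\mu_1-\mu_2)}$ equals the Clebsch--Gordan expansion of the left-hand side, up to the boundary corrections handled in Step~4. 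My first attempt would be to induct on $r$: strip the last two rows, apply the Pieri rule, and verify a two-row identity directly, namely the $\GL_2$-level identity for $r=1$ together with the geometric sum. If the induction proves unwieldy, I would instead verify that both sides satisfy the same recursion in $n_{2r-1}$.

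\emph{Step 4: specialise to $\det\alpha=1$.} In $\GL_{2r}$, a Schur function $s_\lambda$ with $\lambda_{2r}>0$ equals $\det\alpha\cdot s_{\lambda-(1^{2r})}$. Passing to $\SL_{2r}$, where the last entry of $\lambda(n)$ is $0$, therefore collapses all $\lambda$ differing by full columns. A full column of height $2r$ can arise in two ways.
\begin{itemize}
\item Entirely from $\nu$, with weight $y^{r}$.
\item From $\mu$ contributing one box in each of the two bottom rows and $\nu$ contributing the remaining $2r-2$ boxes, with weight $x^2y^{r-1}$.
\end{itemize}
The GL-level identity thus reads: the left-hand side, summed also over full columns, equals the full product. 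Summing the full-column contributions as a geometric-type series yields the correction factor $(1-y^r)(1-x^2y^{r-1})$, which produces the stated formula. Convergence in the region $\operatorname{Re}(2rs_1-s_2),\operatorname{Re}(s_1),\operatorname{Re}(s_2)\gg0$ justifies all rearrangements, since $|x|,|y|,|x^2y^{-1}|<1$ there.
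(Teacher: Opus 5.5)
\textbf{Verdict: genuine gap in Steps 2--3.} Your expansions in Step 1 are correct. Your Step 4 also correctly locates the source of $1-y^r$ (full columns of $\nu$) and of $1-x^2y^{r-1}$ (columns completed by two boxes of $\mu$). But Steps 2--3 carry the whole content of the lemma, and there you never compute the coefficient of $s_\lambda(\alpha)$. You defer it to ``bookkeeping'' or an unspecified induction on $r$. The heuristic meant to guide that bookkeeping is false: the coefficients $c^\lambda_{\mu\nu}$ with $\ell(\mu)\le 2$ and $\nu'$ even are not multiplicity-free. For example, take $r=3$, $\nu=(2,2,1,1,0,0)$ and $\lambda=(3,2,2,1,1,0)=\lambda(1,0,1,0,1)$. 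The skew shape $\lambda/\nu$ is three disconnected boxes, so $s_{\lambda/\nu}=s_{(3)}+2s_{(2,1)}+s_{(1,1,1)}$ and $c^\lambda_{(2,1),\nu}=2$. This matches $\chi_{(1)}^3=\chi_{(3)}+2\chi_{(1)}$ on the left. So there is no $0/1$ ``strip'' description, and matching LR multiplicities against a Clebsch--Gordan expansion of $\prod_j\chi_{(n_{2j-1})}$ is exactly the step you leave open.

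\textbf{The missing idea: sum over $\mu$ first.} Since $s_\mu(u,v)=0$ for $\ell(\mu)>2$, you have $\sum_\mu c^\lambda_{\mu\nu}s_\mu(u,v)=s_{\lambda/\nu}(u,v)$ with $u=xb$, $v=xb^{-1}$. This is the skew Cauchy identity $s_\nu(\alpha)\prod_i(1-u\alpha_i)^{-1}(1-v\alpha_i)^{-1}=\sum_\lambda s_{\lambda/\nu}(u,v)s_\lambda(\alpha)$ that the paper uses. The argument then runs as follows.
\begin{enumerate}
\item A two-variable skew Schur polynomial vanishes unless every column of $\lambda/\nu$ has at most two boxes. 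For $\nu_{2j-1}=\nu_{2j}$ this forces the interlacing $\lambda_{2j}\ge\nu_{2j}\ge\lambda_{2j+1}$.
\item Under this interlacing, rows $2j$ and $2j+1$ of $\lambda/\nu$ share no column. The shape therefore splits into straight blocks $(\lambda_{2j-1}-\nu_{2j},\lambda_{2j}-\nu_{2j})$, giving $s_{\lambda/\nu}(u,v)=(uv)^{\sum_j(\lambda_{2j}-\nu_{2j})}\prod_{j=1}^r s_{(n_{2j-1})}(u,v)$.
\item This is already $x^{\sum_j n_{2j-1}}\prod_j\chi_{(n_{2j-1})}(b)$ times a power of $x^2$, so no Clebsch--Gordan expansion is needed.
\item Summing $y^{\nu_{2j}}x^{2(\lambda_{2j}-\nu_{2j})}$ over $\lambda_{2j+1}\le\nu_{2j}\le\lambda_{2j}$ for $j\le r-1$ gives $y^{\lambda_{2j}}\frac{1-(x^2y^{-1})^{n_{2j}+1}}{1-x^2y^{-1}}$.
\end{enumerate}
The same interlacing is what makes your Step 4 rigorous. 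Write $\lambda_{2r}=a+b$ with $a=\nu_{2r}$, and remove $\lambda_{2r}$ full columns from $\lambda$ together with the corresponding boxes of $\nu$. The result stays among admissible pairs only because $\nu_{2j}\ge\lambda_{2j+1}\ge\lambda_{2r}$ for $j\le r-1$. The removal then contributes exactly the weight $y^{ra}(x^2y^{r-1})^b$, and summing over $a,b\ge 0$ gives the factor $(1-y^r)(1-x^2y^{r-1})$.
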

\begin{proof}
We write $s_\lambda$ (resp. $s_{\lambda/\mu}$) to denote a Schur (resp. skew-Schur) polynomial. Recalling the standard Littlewood identity \cite[Chapter I, \textsection 5, Example 5(b)]{Mac}:
\begin{align*}
    \prod_{i < j} (1 -y \alpha_i \alpha_j)^{-1}
    = \sum_{\nu_1 \geq \cdots \nu_r \geq0}
    y^{\nu_1 + \cdots + \nu_r} s_{(\nu_1,\nu_1,\dots, \nu_r, \nu_r)}(\alpha),
\end{align*}
and since $\alpha \in \SL_{2r}(\C)$, we have a simplified expression
\begin{align}
\label{eqn: Littlewood identity II}
    (1-y^r)\prod_{i<j}(1-y\alpha_i\alpha_j)^{-1}
    = 
    \sum_{\mu_1\geq \cdots \geq\mu_{r-1}\geq0}
    y^{\sum_i \mu_i} s_{\eta(\mu)}(\alpha),
\end{align}
where
\begin{align*}
    \eta(\mu) = (\mu_1,\mu_1,\mu_2,\mu_2,\dots, \mu_{r-1}, \mu_{r-1}, 0,0).
\end{align*}
Setting $u = xb, v = xb^{-1}$ so that $uv = x^2$, the skew-Cauchy identity \cite[Chapter I, \textsection 5, Example 26(1)]{Mac} gives us
\begin{align}
\label{eqn: skew-schur}
    s_{\eta(\mu)}(\alpha) \prod_{i=1}^{2r}\LL( (1-u\alpha_i) (1-v\alpha_i) \RR)^{-1} = \sum_{\lambda} s_{\lambda/\eta(\mu)}(u,v) s_\lambda(\alpha),
\end{align}
where the sum on the right ranges over all partitions of length at most $2r$. In particular, we obtain
\begin{align*}
    \prod_{i=1}^{2r}\LL( (1-u\alpha_i) (1-v\alpha_i) \RR)^{-1}\cdot \sum_{\mu_1\geq \cdots \geq\mu_{r-1}\geq0} y^{\sum_i \mu_i} s_{\eta(\mu)}(\alpha)  =
    \sum_{\mu_1\geq \cdots \geq\mu_{r-1}\geq0} 
    y^{\sum_i \mu_i}
     \sum_{\lambda} s_{\lambda/\eta(\mu)}(u,v) s_\lambda(\alpha).
\end{align*}
Substituting the identity \eqref{eqn: skew-schur} into the equation above, we have
\begin{align}
\label{eqn: skew-schur II}
    \frac{
    1-y^r
    }{
    \prod_{i=1}^{2r}(1-u\alpha_i)(1-v\alpha_i) \prod_{1 \leq i < j \leq 2r}(1-y\alpha_i \alpha_j)
    } = 
    \sum_{\substack{\lambda\\
    \mu_1\geq \cdots \geq \mu_{r-1}\geq0}} 
    y^{\sum_i \mu_i}
    s_{\lambda/\eta(\mu)}(u,v) s_\lambda(\alpha).
\end{align}
Suppose the following summand 
\begin{align}
\label{eqn: summand}
    y^{\sum_i \mu_i}
    s_{\lambda/\eta(\mu)}(u,v) s_\lambda(\alpha)
\end{align}
is nonzero for a partition $\lambda = (\lambda_1,\dots, \lambda_{2r})$ and $\mu_1\geq \cdots \geq \mu_{r-1} \geq0$. Recall that a skew Schur polynomial in two variables vanishes if its skew diagram has a column of height greater than two. Thus, one has
\begin{align}
    \label{eqn: skew-schur constraint}
    \lambda_{2i} \geq \mu_i \geq \lambda_{2i+1}, && \forall 1 \leq i \leq r-1,
\end{align}
and in particular $\mu_{r-1} \geq \lambda_{2r-1} \geq \lambda_{2r}$, so $\mu_i \geq \lambda_{2r}$ for $1 \leq i \leq r-1$. Moreover, defining 
\begin{align*}
    \lambda' = \lambda - \lambda_{2r}(1^{2r}), && 
    \mu' = \mu - \lambda_{2r}(1^{r-1}),
\end{align*}
then again using the fact that $\alpha \in \SL_{2r}(\C)$ we have
\begin{align*}
    s_\lambda(\alpha) = s_{\lambda'}(\alpha).
\end{align*}
Moreover, since the skew diagram contains $\lambda_{2r}$ forced columns of height $2$, each contributing $uv$, we also have
\begin{align*}
    s_{\lambda/\eta(\mu)}(u,v) = (uv)^{\lambda_{2r}} s_{\lambda'/\eta(\mu')}(u,v),
\end{align*}
where $\sum_{i=1}^{r-1} \mu_i = \sum_{i=1}^{r-1} \mu_i' + (r-1) \lambda_{2r}$. Thus, the term on the right of \eqref{eqn: skew-schur II} becomes
\begin{align}
\label{eqn: skew-schur aux}
    \sum_{\lambda_{2r}\geq0}(uvy^{r-1})^{\lambda_{2r}} \cdot \sum_{\substack{\lambda = (\lambda_1\geq \lambda_2 \geq \cdots \geq \lambda_{2r-1} \geq0)\\ 
    \mu_1 \geq \mu_2 \cdots \geq \mu_{r-1}\geq0}}
    y^{\sum_i \mu_i}
    s_{\lambda/\eta(\mu)}(u,v)
    s_\lambda(\alpha).
\end{align}
Substituting \eqref{eqn: skew-schur aux} into \eqref{eqn: skew-schur II}, we obtain
\begin{align}
    \label{eqn: skew-schur III}
    \frac{
    (1-y^r)(1-uvy^{r-1})
    }{
    \prod_i (1-u\alpha_i)(1-v\alpha_i)
    \prod_{i<j} (1-y \alpha_i \alpha_j)
    } = \sum_{\substack{\lambda = (\lambda_1\geq \lambda_2 \geq \cdots \geq \lambda_{2r-1} \geq0)\\ 
    \mu_1 \geq \mu_2 \cdots \geq \mu_{r-1}\geq0}}
    y^{\sum_i \mu_i}
    s_{\lambda/\eta(\mu)}(u,v)
    s_\lambda(\alpha).
\end{align}
Furthermore, setting $n_i = \lambda_i - \lambda_{i+1}$ then under the constraint \eqref{eqn: skew-schur constraint}, we have the following closed formula for $s_{\lambda/\eta(\mu)}(u,v)$:
\begin{align}
    \label{eqn: skew Schur polynomial}
    s_{\lambda/\eta(\mu)}(u,v) = (uv)^{\sum_{i=1}^{r-1}(\lambda_{2i} - \mu_i)}
    \prod_{j=1}^r s_{(n_{2j-1},0)}(u,v).
\end{align}
Substituting the above closed formula \eqref{eqn: skew Schur polynomial} together with the following
\begin{align*}
    \sum_{\mu_i =\lambda_{2i+1}}^{\lambda_{2i}} 
    y^{\mu_i}(uv)^{\lambda_{2i} - \mu_i}
    = y^{\lambda_{2i}}
    \frac{
    1-(uvy^{-1})^{n_{2i}+1}
    }{
    1-uvy^{-1}
    },&&
    \sum_{i=1}^{r-1}\lambda_{2i} =
    \sum_{i=1}^{r-1}
    i(n_{2i} + n_{2i+1}),
\end{align*}
into \eqref{eqn: skew-schur III}, we obtain our desired identity. This concludes the proof.
\end{proof}
With these preliminary results, we can proceed with the proof of Theorem \ref{thm: unramified computation}. We first compute the local unramified integral $\calZ_r^\circ(W^\circ,s_1,s_2)$. By Iwasawa decomposition, we have
\begin{align*}
        \calZ_r^\circ(W^\circ,s_1,s_2) = \int_{Z_{2r}(F) \sm T_{2r}(F)} 
        \D_{B_{2r}}^{-1}(t)
        W_{\pi_{2r}}^\circ(t) 
        W_{\Delta(\tau_2,r)}^\circ(t)
        f_{W_{2r-1}}^\circ(t,s_1,s_2)\,dt,
    \end{align*}
We parametrise elements of $Z_{2r}(F) \sm T_{2r}(F)$ by $t \in \GL_{2r}(F)$ in \eqref{eqn: t} and set $n_i = \nu_F(a_i)$. By the Casselman-Shalika-Shintani \cite{CS,Sh} formula, we have
\begin{align*}
    W_{\pi_{2r}}^\circ(t) =& \begin{dcases*}
        \D_{B_{2r}}^{1/2}(t) \chi_{\lambda(n)}^{\SL_{2r}(\C)} (t_{\pi_{2r}}), &if $n_i \geq0$,\\
        0 &otherwise,
    \end{dcases*} && 
    W_{\tau_2}^\circ(\diag(\varpi^m, 1)) = \begin{dcases*}
        q^{-m/2} \chi^{\SL_2(\C)}_{(m,0)}(t_{\tau_2}), &if $m\geq0$,\\
        0 &otherwise,
    \end{dcases*}
\end{align*}
where  $t_{\pi_{2r}}\in \SL_{2r}(\C)$ and $t_{\tau_2} \in \SL_2(\C)$ are the Satake parameters given in \eqref{eqn: Satake parameters} and $\lambda(n)$ is as in \eqref{eqn: lambda-n}.
With these, together with the closed formulas for $W_{\Delta(\tau_2,r)}^\circ(t)$ and $f_{W_{2r-1}^\circ}(t,s_1,s_2)$ given in Proposition \ref{prop: theta-tau-(r,r) closed formula}, we have
\begin{align*}
    &\prod_{j=0}^{r-2}\zeta(2rs_1-s_2-j) \cdot \calZ_r^\circ(W^\circ,s_1,s_2)\\
    &=\sum_{n_1,\dots, n_{2r-1}\geq0}
    \chi_{\lambda(n)}^{\SL_{2r}(\C)} (t_{\pi_{2r}})
    \LL(\prod_{i=1}^r \chi_{(n_{2i-1},0)}^{\SL_2(\C)}(t_{\tau_2}) \RR)
    x^{\sum_{j=1}^r n_{2j-1}}
    y^{\sum_{j=1}^{r-1} j(n_{2j} + n_{2j+1})} \prod_{j=1}^{r-1}
    \frac{
    (1-(x^2y^{-1})^{n_{2j+1}}
    }{
    1-x^2y^{-1}
    },
\end{align*}
for $x = q^{-rs_1+(r-1)/2}$ and $y = q^{-s_2}$. From Lemma \ref{lem: Cauchy-Littlewood}, we obtain the desired identity for $\calZ_r^\circ(W^\circ,s_1,s_2)$ in Theorem \ref{thm: unramified computation}. As for the other integral $\calZ_r^\circ(W^\circ,s,s_1,s_2)$, we follow the same argument to obtain a similar expression for $\calZ_r^\circ(W^\circ,s,s_1,s_2)$:
\begin{align*}
    \calZ_r^\circ(W^\circ,s,s_1,s_2) = \int_{Z_{2r}(F) \sm T_{2r}(F)} \D_{B_{2r}}^{-1}(t)
    W_{\pi_{2r}}^\circ(t)
    W^\circ_{(r,r);2r}(t,s) 
    f_{W_{2r-1}^\circ}(t,s_1,s_2)\,dt.
\end{align*}
Setting $b = q^{rs-r/2} \in \C^\tm$ in Lemma \ref{lem: Cauchy-Littlewood} and using the closed formulas for $W_{(r,r);2r}^\circ(t,s)$ and $f_{W_{2r-1}^\circ}(t,s_1,s_2)$ given in Proposition \ref{prop: theta-tau-(r,r) closed formula}, we obtain the desired identity for $\calZ_r^\circ(W^\circ,s,s_1,s_2)$. This concludes the proof of Theorem \ref{thm: unramified computation}.

\end{document}